\newtheorem{theorem}{Theorem}[section]
\newtheorem{lemma}[theorem]{Lemma}
\newtheorem{prop}[theorem]{Proposition}
\newtheorem{cor}[theorem]{Corollary}
\theoremstyle{definition}
\newtheorem{definition}[theorem]{Definition}
\theoremstyle{remark}
\newtheorem{remark}[theorem]{Remark}
\numberwithin{equation}{section}
\DeclareMathOperator{\rg}{rg}
\DeclareMathOperator{\rank}{rank}
\begin{document}

\title{On blowup of co-rotational wave maps in odd space dimensions}

\author{Athanasios Chatzikaleas}
\address{Rheinische Friedrich - Wilhelms - Universit\"at Bonn, Mathematisches Institut, Endenicher Allee 60, D-53115 Bonn, Germany}
\email{achatzik@math.uni-bonn.de}

\author{Roland Donninger}
\address{Rheinische Friedrich - Wilhelms - Universit\"at Bonn, Mathematisches Institut, Endenicher Allee 60, D-53115 Bonn, Germany}
\address{Universit\"at Wien, Fakult\"at f\"ur Mathematik, Oskar-Morgenstern-Platz 1, A-1090 Vienna, Austria}
\email{donninge@math.uni-bonn.de, roland.donninger@univie.ac.at}
\thanks{Roland Donninger is supported by the Alexander von Humboldt Foundation via
a Sofja Kovalevskaja Award endowed by the German Federal Ministry of Education
and Research. Partial support by the Deutsche Forschungsgemeinschaft 
(DFG), CRC 1060 'The Mathematics of Emergent Effects', is also gratefully acknowledged.}


\author{Irfan Glogi\'c}
\address{Department of Mathematics, The Ohio State University, 231 W 18th Ave,
Columbus, OH, 43220, USA}
\email{glogic.1@osu.edu}


\dedicatory{}

\begin{abstract}
We consider co-rotational wave maps from the $(1+d)$-dimensional Minkowski space into the $d$-sphere for $d\geq 3$ odd. This is an energy-supercritical model which is known to exhibit finite-time blowup via self-similar solutions. Based on a method developed by the second author and Sch\"orkhuber, we prove the asymptotic nonlinear stability of the ``ground-state'' self-similar solution. 
\end{abstract}

\maketitle

\section{Introduction} 

\noindent
Let $(M,g)$ be a Lorentzian spacetime and $(N,h)$ a Riemannian manifold. In this paper, we study wave maps $u:(M,g) \longrightarrow (N,h)$, that is, critical points of the geometric action functional
\begin{align*}
S_{g}[u]:=\frac{1}{2} \int _{M} |d_{g} u|^{2}~d \mu_{g}. 
\end{align*}
Here, 
\begin{align*}
|d_{g}u(x)|^{2} \equiv  |d_{g}u (x)|_{ T^{\star}_{x}M \otimes T_{u(x)} N  }^{2}:= \text{tr}_{g} \left (  u^{\star} \left( h \right) \right) 
\end{align*}
is the trace (with respect to $g$) of the pullback metric on $(M,g)$ via the map $u$. The integral is understood with respect to the standard measure $d\mu _{g}$ on the domain manifold. In local coordinates $(x_{\mu})$ on $(M,g)$, this expression reads
\begin{align*}
S_{g}[u]= \int_{M} g ^{\mu \nu} (\partial _{\mu}  u^{a}) (\partial _{\nu}  u^{b}) h_{a b} \circ u~ d \mu_{g}
\end{align*}
where the Einstein summation convention is used. The Euler-Lagrange equations associated to this functional are
\begin{align} \label{WM}
\Box _{g} u^{a} + g ^{\mu \nu} (\Gamma ^{a}_{b c} \circ u) (\partial _{\mu}  u^{b} ) (\partial _{\nu}  u^{c}) =0
\end{align}
and they constitute a system of semi-linear wave equations. Here, $\Box _{g}$ is the Laplace-Beltrami operator on $(M,g)$ 
\begin{align*}
\Box_{g} := \frac{1}{|g|} \partial _{\mu} (g^{\mu \nu} |g| \partial _{\nu}),\quad |g|:=\sqrt{ \left| \text{det}(g_{\mu \nu}) \right| }
\end{align*}
and $\Gamma ^{a}_{b c}$ are the Christoffel symbols associated to the metric $h$ on the target manifold. Eq.~\eqref{WM} is called the wave maps equation (known in the physics literature as non-linear $\sigma$ model) and is the analog of harmonic maps between Riemannian manifolds in the case where the domain is a Lorentzian manifold instead. For more details, we refer the reader to \cite{Ren08} and \cite{Str97}.

\subsection{Intuition} Recently, the wave maps equation has attracted a lot of interest. On the one hand, the wave maps equation is a rich source for understanding nonlinear geometric equations since it is a nonlinear generalization of the standard wave equation on Minkowski space. In addition, the wave maps equation has a pure geometric interpretation: it generalizes the notion of geodesic curves. Notice that, if $M = (\alpha, \beta)$ is an open interval and $(N,h)$ any curved Riemannian manifold, the wave maps equation is the geodesic equation
\begin{align*}
\frac{d^2 u^{a}}{dt ^2} (t) + (\Gamma ^{a}_{b c} \circ u(t))  \frac {d u^{b} }{dt} (t) \frac{d  u^{c} }{dt} (t)=0. 
\end{align*}
On the other hand, the Cauchy problem for the wave maps system provides an attractive toy-model for more complicated relativistic field equations. Specifically, wave maps contain many features of the more complex Einstein equations but are simple enough to be accessible for rigorous mathematical analysis. 
Further details on the correlation between the wave maps system and the Einstein equations can be found in \cite{Mis78, Mon89, Wei90, Kla97}. 

Being a time evolution equation, the fundamental problem is the Cauchy problem: given specified smooth initial data, does there exist a unique smooth solution to the wave maps equation with this initial data? Furthermore, does the solution exist for all times? On the other hand, if the solution only exists up to some finite time $T$, how does the solution blow up as $t$ approaches $T$? 
The investigation of questions of global existence and formation of singularities for the wave maps equation can give insight into the analogous, but much more difficult, problems in general relativity. 
\subsection{Equivariant wave maps} Now, we turn our attention to the Cauchy problem in the case where the domain is the Minkowski spacetime $(\mathbb{R}^{1+d},g)$ and the target manifold is the sphere $(\mathbb{S}^{d},h)$ for $d \geq 3$. Hence, we pick $g =$diag$(-1,1,\dots,1)$ and $h$ to be the standard metric on the sphere. Furthermore, we choose standard spherical coordinates on Minkowski space and hyper-spherical coordinates on the sphere. The respective metrics are given by
\begin{align*}
g = - dt^2 + dr^2 + r^2 d \omega ^2, \quad h= d \Psi ^2 + \sin^2(\Psi) d \Omega ^2,
\end{align*}
where $d\omega ^2$ and $d \Omega ^2$ are the standard metrics on $\mathbb{S}^{d-1}$.  Moreover, a map $u:(\mathbb{R}^{1+d},g) \longrightarrow (\mathbb{S}^{d},h)$ can be written as
\begin{align*}
u (t,r,\omega) = \big( \Psi  (t,r,\omega ), \Omega (t,r,\omega) \big).
\end{align*}
We restrict our attention to the special subclass known as 1-equivariant or co-rotational, that is
\begin{align*}
\Psi  (t,r,\omega ) \equiv \psi (t,r),~~~ \Omega (t,r, \omega ) = \omega. 
\end{align*}
 Under this ansatz, the wave maps system for functions $u:(\mathbb{R}^{1+d},g) \longrightarrow (\mathbb{S}^{d},h)$ reduces to the single semi-linear wave equation 
 \begin{equation}
 \label{eq:main} \psi_{tt}-\psi_{rr}-\frac{d-1}{r}\psi_r+\frac{d-1}{2}\frac{\sin(2\psi)}{r^2}=0. 
 \end{equation}
 By finite speed of propagation and radial symmetry it is natural to
study this equation in backward light-cones with vertex $(T,0)$, that is
\begin{align*}
C_{T} :=\left \{  (t,r) : 0<t<T,~0 \leq r \leq T-t \right \}
\end{align*}
where $T>0$.
Consequently, we consider the Cauchy problem \\
\begin{align} \label{cauchy}
     \begin{cases}
       \psi _{tt} (t,r) - \Delta ^{  \text{rad}  }_{r,d} \psi (t,r)  = - \frac{d-1}{2} \frac{ \sin ( 2 \psi (t,r) ) }{r^2}, &\quad \text{in } C_{T} \\
       \psi (0,r)= f(r),~~~ \psi _{t} (0,r)= g(r), &\quad \text{on } \{ t=0 \} \times [0,+\infty)  \ 
     \end{cases}
\end{align}	
where $\Delta ^{  \text{rad}  }_{r,d} $ stands for the radial Laplacian 
\begin{align*}
\Delta ^{  \text{rad}  }_{r,d} \psi (t,r) :=  \psi _{rr} (t,r) + \frac{d-1}{r} \psi _{r} (t,r).
\end{align*}
To ensure regularity of solutions, equations $\eqref{cauchy}$ must be supplemented by the boundary condition
\begin{align} \label{reg}
\psi (t,0)=0,\quad  \text{~for all~} t \in (0,T).
\end{align}

\subsection{Self-similar solutions} 
A basic question for the Cauchy problem $\eqref{cauchy}$ is whether solutions starting from smooth initial data 
\begin{align*}
(f,g)=\left( \psi (0, \cdot), \partial _{t} \psi (0, \cdot) \right)
\end{align*}
can become singular in the future. Note that Eq.~\eqref{eq:main} has the conserved energy
\begin{align*}
E[\psi]:=\int_{0}^{\infty} \left( \psi _{t}^2 + \psi _{r}^2 + (d-1)\frac{\sin ^2(\psi) }{r^2} \right) r^2 dr .
\end{align*}
However, the energy cannot be used to control the evolution since Eq.~\eqref{cauchy} is not well-posed at energy regularity, cf.~\cite{ShaTah94}. 
Indeed, Eq.~\eqref{eq:main} is invariant under dilations
\begin{align} \label{dilation}
\psi _{\lambda} (t,r):=\psi \left( \frac{t}{\lambda},\frac{r}{\lambda} \right),~ \lambda >0
\end{align}
and the critical Sobolev space for the pair $(\psi(t,\cdot),\partial_t \psi(t,\cdot))$ is $\dot H^{\frac{d}{2}}\times \dot H^{\frac{d}{2}-1}$.
Consequently, Eq.~\eqref{eq:main} is energy-supercritical for $d\geq 3$.

In fact, due to the scaling $\eqref{dilation}$ and the supercritical character it is natural to expect self-similar solutions and indeed, it is well known that there exist smooth initial data which lead to solutions that blowup in finite time in a self-similar fashion. Specifically, Eq.~\eqref{eq:main} admits the self-similar solution
\begin{align*}
\psi ^{T} (t,r) := f_{0} \Big ( \frac{r}{T-t } \Big) = 2 \arctan \Bigg( \frac{r}{\sqrt{d-2}(T-t)  } \Bigg),\qquad T>0.
\end{align*}
This example is due to Shatah \cite{Sha88}, Turok-Spergel \cite{TurSpe90} for $d=3$, and  Bizo\'n-Biernat \cite{BizBie15} for $d \geq 4$ and provides an explicit example for singularity formation from smooth initial data. Indeed, the self-similar solution $\psi^T$ is perfectly smooth for all $0<t < T$ but breaks down at $t = T$ in the sense that 
\begin{align*}
\partial_r\psi ^{T} (t,r)|_{r=0} 
\simeq \frac{1}{T-t} \longrightarrow + \infty,~~~\text{as}~ t \longrightarrow T^{-}.
\end{align*}
We note in passing that for $d\in \{3,4,5,6\}$, $\psi^T$ is just one member of a countable family of self-similar solutions, see \cite{Biz00, BieBizMal16}. 

\subsection{The main result}
By finite speed of propagation one can use $\psi ^{T}$ to construct smooth, compactly supported initial data which lead to a solution that blows up as $t \longrightarrow T$. Our main theorem is concerned with the asymptotic nonlinear stability of $\psi^T$. In other words, we prove the existence of an open set of radial data which lead to blowup via $\psi ^{T}$. In this sense, the blowup described by $\psi ^{T}$ is stable. To state our main result, we will need the notion of the blowup time at the origin.
From now on we use the abbreviation $\psi[t]=(\psi(t,\cdot),\partial_t \psi(t,\cdot))$.
\begin{definition}
Given initial data $(\psi_{0},\psi_{1})$, we define
\begin{align*}
T_{(\psi_{0},\psi_{1})} := \sup  \left\{
T >0 \middle|
  \begin{subarray}{c}  
  \exists \text{~solution~} \psi : C_{T} \longrightarrow \mathbb{R} \text{~to~} \eqref{cauchy} \text{~in the sense of}   \\ 
   \text{~Definition~} \ref{def} \text{~with initial data~} \psi[0]=(\psi_{0},\psi_{1}) |_{\mathbb{B}_{T}^{d}} 
  \end{subarray} \right\} \cup  \{0\}.
\end{align*}
In the case where $T_{(\psi_{0},\psi_{1})} < \infty$, we call $T \equiv T_{(\psi_{0},\psi_{1})}$ the blowup time at the origin.
\end{definition}
 We remark that the effective spatial dimension for the problem \eqref{cauchy} is $d+2$.
To see this, recall that, by regularity, we get the boundary condition $\eqref{reg}$. Therefore, it is natural to switch to the variable $\widehat \psi(t,r):=r^{-1}\psi (t,r)$.  Then \eqref{cauchy} transforms into
\begin{align*}
     \begin{cases}
        \widehat{\psi}  _{tt} (t,r) - \Delta ^{  \text{rad}  }_{r,d+2}  \widehat{\psi} (t,r)  = - \frac{d-1}{2} \frac{ \sin ( 2 r  \widehat{\psi}  (t,r) ) -2r \widehat{\psi} (t,r)}{r^3}, &\quad \text{in } C_{T} \\
        \widehat{\psi}  (0,r)= \frac{f(r)}{r},~~~  \widehat{\psi}  _{t} (0,r)= \frac{g(r)}{r}, &\quad \text{on } \{ t=0 \} \times [0,+\infty)  \ 
     \end{cases}
\end{align*}	
Note that the nonlinearity is now generated by a smooth function and the radial Laplacian is in $d+2$ dimensions. 
 
\begin{theorem} 
Fix $T_{0}>0$ and $d\geq 3$ odd. Then there exist constants $M,\delta,\epsilon >0$ such that for any radial initial data
$\psi [0]$  
satisfying 
\begin{align*}
\Big \| |\cdot|^{-1} \Big( \psi[0] -\psi^{T_{0}}[0] \Big) \Big \|_{ H^{\frac{d+3}{2}} (\mathbb{B}_{T_{0}+\delta}^{d+2}) \times H^{\frac{d+1}{2}} (\mathbb{B}_{T_{0}+\delta}^{d+2}  )} \leq \frac{\delta}{M}
\end{align*}
the following statements hold:
\begin{enumerate}
\item $T \equiv T_{\psi[0]} \in [T_{0}-\delta,T_{0}+\delta]$,
\item the solution $\psi :C_{T} \longrightarrow \mathbb{R}$ satisfies
\begin{align*}
(T-t)^{k-\frac{d}{2}} \Big \|  |\cdot|^{-1} \Big( \psi (t,\cdot) - \psi ^{T} (t, \cdot) \Big) \Big \|_{ \dot{H}^{k}(\mathbb{B}^{d+2}_{T-t} ) }  &\leq\delta (T-t)^{\epsilon} \\
(T-t)^{\ell+1-\frac{d}{2}} \Big \|  |\cdot|^{-1} \Big( \partial_t \psi (t,\cdot) - \partial_t \psi ^{T} (t, \cdot) \Big) \Big \|_{ \dot{H}^{\ell}(\mathbb{B}^{d+2}_{T-t} ) }  &\leq \delta (T-t)^{\epsilon}
\end{align*}
for all $k=0,1,2, \dots, \frac{d+3}{2}$ and $\ell=0,1,2\dots,\frac{d+1}{2}$.
\end{enumerate}
\end{theorem}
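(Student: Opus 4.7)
The proof follows the Donninger-Sch\"orkhuber program: recast the stability of $\psi^T$ as an abstract Cauchy problem in similarity variables and close a modulation/contraction argument. As indicated in the excerpt, the rescaling $\widehat{\psi}=\psi/r$ lifts the problem into effective dimension $d+2$ with smooth nonlinearity. I then introduce similarity variables $\tau=-\log(T-t)$, $\rho=r/(T-t)$, so that the backward lightcone $C_T$ becomes the cylinder $[0,\infty)\times\mathbb{B}^{d+2}_1$ and the self-similar profile is $\tau$-independent. Writing $\Phi(\tau)=(\widehat\psi,\partial_\tau\widehat\psi)-(\widehat\psi^1,0)$ yields an equation $\partial_\tau\Phi=L\Phi+N(\Phi)$ on the Hilbert space $\mathcal{H}=H^{(d+3)/2}(\mathbb{B}^{d+2}_1)\times H^{(d+1)/2}(\mathbb{B}^{d+2}_1)$, a regularity above the scaling-critical one so that $\mathcal{H}$ is an algebra in the first slot and $N$ enjoys a local Lipschitz bound $\|N(\Phi_1)-N(\Phi_2)\|_{\mathcal{H}}\lesssim(\|\Phi_1\|_{\mathcal{H}}+\|\Phi_2\|_{\mathcal{H}})\|\Phi_1-\Phi_2\|_{\mathcal{H}}$ via Moser estimates, using smoothness of the reduced nonlinearity $r\mapsto r^{-3}(\sin(2r\widehat\psi)-2r\widehat\psi)$.

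The technical core is the analysis of the linearized generator $L$. Semigroup generation of $\{S(\tau)\}_{\tau\geq 0}$ is obtained via Lumer-Phillips after equipping $\mathcal{H}$ with an equivalent inner product adapted to the characteristic boundary $\rho=1$; this produces an a priori bound $\|S(\tau)\|_{\mathcal{H}\to\mathcal{H}}\lesssim e^{\omega_0\tau}$ for some $\omega_0>0$. The main obstacle is the spectral analysis: one needs to prove that $\sigma(L)\cap\{\operatorname{Re}\lambda\geq -\omega\}$ for some $\omega>0$ consists solely of the simple eigenvalue $\lambda=1$, whose eigenfunction $\Phi_*$ is generated by differentiating the family $\{\psi^T\}_{T>0}$ in $T$ and encodes the gauge freedom in the blowup time. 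In odd dimensions the eigenvalue ODE reduces to a hypergeometric equation, so spectral points correspond to those $\lambda$ at which the Frobenius solution regular at $\rho=0$ matches the Frobenius solution regular at $\rho=1$; excluding unstable eigenvalues $\operatorname{Re}\lambda>0$, $\lambda\neq 1$, amounts to proving the non-vanishing of an explicit connection coefficient expressible through $\Gamma$-functions. This calculation, tailored to the profile $f_0(\rho)=2\arctan(\rho/\sqrt{d-2})$ and to odd $d$, is the central new ingredient.

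With the spectral picture in place, the Riesz projection $P$ onto the $\lambda=1$ eigenspace is rank one, and one derives the decay $\|(I-P)S(\tau)(I-P)\|_{\mathcal{H}\to\mathcal{H}}\lesssim e^{-\omega\tau}$. The Duhamel integral $\Phi(\tau)=S(\tau)\Phi_0+\int_0^\tau S(\tau-s)N(\Phi(s))\,ds$ is then solved by a Banach fixed-point argument in the space of continuous $\mathcal{H}$-valued maps decaying like $e^{-\epsilon\tau}$, after subtracting an a priori correction along $\Phi_*$ which is determined by enforcing decay. The blowup parameter $T$ is tuned via a Brouwer/implicit-function argument so that this correction vanishes at $\tau=0$, yielding $T\in[T_0-\delta,T_0+\delta]$ and a genuinely decaying $\Phi$. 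Finally, unwinding the similarity transform produces the original-coordinate estimates on $\psi(t,\cdot)-\psi^T(t,\cdot)$: the scaling $(T-t)^{k-d/2}\|\cdot\|_{\dot H^k(\mathbb{B}^{d+2}_{T-t})}$ reproduces the $\mathcal{H}$-norm of $\Phi(\tau)$ by homogeneity, so the exponential decay in $\tau$ translates into the polynomial gain $(T-t)^{\epsilon}$ in the statement, for both ranges of $k$ and $\ell$ by interpolation between the endpoints in $\mathcal{H}$.
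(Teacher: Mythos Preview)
Your overall architecture matches the paper's: rescale to $d+2$ dimensions, pass to similarity coordinates, linearize, establish semigroup generation and spectral decomposition, prove a Lipschitz bound for $N$ via Moser, run a contraction with a correction along the unstable direction, and modulate $T$ to kill the correction. The final translation back to the $(t,r)$ variables and the Sobolev scaling are also as in the paper.

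The genuine gap is in your spectral step. You assert that ``in odd dimensions the eigenvalue ODE reduces to a hypergeometric equation'' and that exclusion of unstable eigenvalues ``amounts to proving the non-vanishing of an explicit connection coefficient expressible through $\Gamma$-functions.'' This is not correct for the present problem. The linearization around $f_0(\rho)=2\arctan(\rho/\sqrt{d-2})$ produces a potential
\[
\hat V(\rho)=2\,\frac{\rho^4-6(d-2)\rho^2+(d-2)^2}{\rho^2(\rho^2+d-2)^2},
\]
so the spectral ODE has, in addition to the regular singular points at $\rho=0,1,\infty$, further singularities at $\rho=\pm i\sqrt{d-2}$. It is therefore \emph{not} of hypergeometric type, and no closed-form $\Gamma$-function connection formula is available. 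The paper handles mode stability by importing the results of \cite{CosDonXia16, CosDonGlo16}, which rely on a supersymmetric factorization of the spectral problem rather than on special-function identities; this is where odd $d$ enters in a substantive way.

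A second point you pass over is the \emph{algebraic} simplicity of the eigenvalue $\lambda=1$. Geometric simplicity (one-dimensional kernel) is straightforward, but to get the rank-one Riesz projection and hence the decay on $\mathrm{rg}(1-P)$ you must rule out a generalized eigenvector, i.e., solve $(1-L)\mathbf u=\mathbf g$ and show no $H^{(d+3)/2}_{\mathrm{rad}}$ solution exists. The paper singles this out as a new difficulty (Proposition~\ref{projection}) and resolves it via an explicit variation-of-constants computation together with the appendix (Proposition~\ref{prop:Id}), which again appeals to the supersymmetric structure from \cite{CosDonGlo16}. Your sketch does not account for this step.
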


\begin{remark}
Note that the normalizing factors on the left-hand sides appear naturally and reflect the behavior of the self-similar solution $\psi^T$ in the respective homogeneous Sobolev norms,
i.e.,
\begin{align*}
  \||\cdot|^{-1}\psi^T(t,\cdot)\|_{\dot H^k(\mathbb B^{d+2}_{T-t})}&=\left \||\cdot|^{-1} f_0 \left(\frac{|\cdot|}{T-t} \right)  \right \|_{ \dot{H}^{k}( \mathbb{B}^{d+2}_{T-t} ) } =(T-t)^{\frac{d}{2}-k} \||\cdot|^{-1} f_0 \left(|\cdot| \right)  \|_{ \dot{H}^{k}( \mathbb{B}^{d+2}_{1} ) } 
  \end{align*}
  and
  \begin{align*}
\||\cdot|^{-1}\partial_t \psi^T(t,\cdot)\|_{\dot H^\ell(\mathbb B^{d+2}_{T-t})}&=(T-t)^{-2}\left \|f_0' \left(\frac{|\cdot|}{T-t} \right)  \right \|_{ \dot{H}^{\ell}( \mathbb{B}^{d+2}_{T-t} ) } \\
& =(T-t)^{\frac{d}{2}-\ell-1} \| f_0' \left(|\cdot| \right)  \|_{ \dot{H}^{\ell}( \mathbb{B}^{d+2}_{1} ) }  .
\end{align*}
\end{remark}

\subsection{Related results}
The question of singularity formation for the wave maps equation attracted a lot of interest in the recent past, in particular in the energy-critical case $d=2$. 
Bizo\'n-Chmaj-Tabor \cite{BizChmTab01} were the first to provide numerical evidence for the existence of blowup for critical wave maps with $\mathbb S^2$ target. 
Rigorous constructions of blowup solutions for this model are due to Krieger-Schlag-Tataru \cite{KriSchTat08}, Rodnianski-Sterbenz \cite{RodSte10}, and Rapha\"el-Rodnianski \cite{RapRod12}. Struwe \cite{Str03} showed that blowup for equivariant critical wave maps takes place via shrinking of a harmonic map.
This result was considerably generalized to the nonequivariant setting by Sterbenz-Tataru \cite{SteTat10a, SteTat10b}, see also Krieger-Schlag \cite{KriSch12} for a different approach to the large-data problem and e.g.~\cite{CotKenLawSch15a, CotKenLawSch15b, Cot15,
CanKri15, Sha16, LawOh16} for more recent results on blowup and large-data global existence. 

The energy-supercritical regime $d\geq 3$ is less understood. The small-data theory at minimal regularity is due to Shatah-Tahvildar-Zadeh \cite{ShaTah94} in the equivariant setting whereas Tataru \cite{Tat98, Tat01} and Tao \cite{Tao01a, Tao01b} treat the general case, see also \cite{KlaRod01, ShaStr02, NahSteUhl03, Kri03, Tat05}.
Self-similar blowup solutions were found by Shatah \cite{Sha88}, Turok-Spergel \cite{TurSpe90}, Cazenave-Shatah-Tahvildar-Zadeh \cite{CazShaTah98}, and Bizo\'n-Biernat \cite{BizBie15}.
The stability of self-similar blowup was investigated numerically in \cite{BizChmTab00, BizBie15, BieBizMal16} and proved rigorously in \cite{Don11, DonSchAic12, CosDonXia16, CosDonGlo16} in the case $d=3$. Furthermore, Dodson-Lawrie \cite{DodLaw15} proved that solutions with bounded critical norm scatter.

Finally, concerning the method, we remark that our proof relies on the techniques developed in the series of papers \cite{Don11, DonSchAic12, DonSch12, DonSch14, Don14, DonSch16, DonSch16a}.
However, we would like to emphasize that the present paper is not just a straightforward continuation of these works. In fact, new interesting issues arise, e.g.~in the spectral theory part, see Proposition \ref{projection} below. 

\section{Radial wave equation in similarity coordinates}
\label{sec:sim}

\noindent
To start our analysis, we rewrite the initial value problem $\eqref{cauchy}$ as an abstract Cauchy problem in a Hilbert space. First, we rescale the variable $\psi \equiv \psi (t,r)$ and switch to similarity coordinates. Then, we linearize around the rescaled blowup solution and derive the evolution problem satisfied by the perturbation. 
\subsection{Rescaled variables}
We define
\begin{align*}
\chi _{1} (t,r) := \frac{T-t}{r} \psi (t,r),\qquad \chi _{2} (t,r) := \frac{(T-t)^2}{r} \psi _{t} (t,r).
\end{align*}
Using the fact that $\psi$ is a solution to \eqref{cauchy}, we get
\begin{align*}
 \partial _{t} \chi _{1} (t,r) = &  - \frac{1}{T-t} \chi _{1} (t,r) +  \frac{1}{T-t} \chi _{2} (t,r), \\
 \partial _{t} \chi _{2} (t,r) =  & - \frac{2}{T-t} \chi _{2} (t,r) +  (T-t) \Delta ^{ \text{rad} }_{r,d} \chi _{1} (t,r) + \frac{2(T-t)}{r} \partial _{r} \chi _{1} (t,r) \\ 
& + (d-1) \frac{T-t}{r^2} \chi _{1} (t,r) - \frac{d-1}{2} (T-t)^2 \frac{ \sin \big( \frac{2r}{T-t} \chi _{1} (t,r) \big) }{r^3}.
\end{align*}
We introduce similarity coordinates
\begin{align*}
\mu: C_{T} \longrightarrow  \mathcal{C} ,~~ (t,r) \longmapsto \mu (t, r) =(\tau,\rho) := \Big( \log \Big ( \frac{T}{T-t} \Big ), \frac{r}{T-t} \Big ),
\end{align*}
which map the backward light-cone $C_T$ to the cylinder $\mathcal{C}:=(0,+ \infty) \times  [0,1]$. 
By the chain rule, the derivatives transform according to
\begin{align*}
\partial _{t} = \frac{e^{\tau}}{T} (\partial _{\tau} + \rho \partial _{\rho}),~ \partial _{r} = \frac{e^{\tau}}{T} \partial _{\rho},~ \partial ^{2}_{r} = \frac{e^{2 \tau}}{T^{2}} \partial ^{2}_{\rho},~ \Delta ^{ \text{rad} }_{r,d} = \frac{e^{2 \tau}}{T^{2}} \Delta ^{ \text{rad} }_{\rho,d}.
\end{align*}
Finally, setting
\begin{align*}
\psi _{j} (\tau, \rho) := \chi _{j} (t(\tau,\rho),r(\tau,\rho) ) = \chi _{j} (T(1-e^{-\tau}), T \rho e^{-\tau}), 
\end{align*}
for $j=1,2$, we obtain the system
\begin{align} \label{free}
& \begin{pmatrix}
   \partial _{\tau}  \psi _{1}  (\tau,\rho)  \\
   \partial _{\tau}  \psi _{2} (\tau,\rho) 
 \end{pmatrix} 
  =
  \begin{pmatrix}
   - \psi _{1}  (\tau,\rho) + \psi _{2} (\tau,\rho)   - \rho \partial _{\rho} \psi _{1}  (\tau,\rho)    \\
   \Delta ^{ \text{rad} }_{\rho,d+2} \psi _{1} (\tau,\rho) - \rho \partial _{\rho} \psi _{2} (\tau,\rho)  - 2 \psi _{2} (\tau,\rho)  
   \end{pmatrix}  \\ \nonumber 
& \quad  \quad   \quad   \quad  \quad   \quad  -  \frac{d-1}{ 2 \rho ^3}
   \begin{pmatrix}
   0 \\
   \sin ( 2 \rho \psi _{1} (\tau,\rho)  ) -2 \rho \psi _{1} (\tau,\rho)
   \end{pmatrix},
\end{align}
for $(\tau,\rho) \in \mathcal{C}$. 
 Note that the linear part is the free operator of the $(d+2)-$dimensional wave equation in similarity coordinates and the nonlinearity is perfectly smooth. Furthermore, the initial data transform according to
\begin{align*} 
\begin{pmatrix}
   \psi _{1}   (0,\rho) \\
   \psi _{2}   (0,\rho)
 \end{pmatrix} 
=   \frac{1}{\rho}  
\begin{pmatrix}
 f(T\rho) \\
  T g (T\rho)
 \end{pmatrix} 
=  \frac{1}{\rho}
\begin{pmatrix}
 \psi^{T_{0}}(0,T\rho) \\
 T\partial_0 \psi ^{T_{0}} (0,T\rho)
\end{pmatrix}+
\frac{1}{\rho}  
\begin{pmatrix}
F(T\rho) \\
T G(T\rho)
\end{pmatrix},
\end{align*}
for all $\rho \in [0,1]$. Here, $T_{0}>0$ is a fixed parameter and
\begin{align*}
& \psi ^{T_{0}} (0,T\rho) =  2 \arctan \left(  \frac{T }{T_{0}} \frac{\rho}{ \sqrt{d-2} }    \right),\quad \rho \equiv \rho (t,r):=\frac{r}{T-t}, \\ 
& F:=f-\psi ^{T_{0}}(0,\cdot), \quad G:=g-\partial_0 \psi^{T_{0}}(0,\cdot).
\end{align*}
We emphasize that the only trace of the parameter $T$ is in the initial data.

\subsection{Perturbations of the rescaled blowup solution} We linearize around the rescaled blowup solution and use the initial value problem for $(\psi_{1},\psi_{2})^{T}$ to obtain an initial value problem for the perturbation as an abstract Cauchy problem in a Hilbert space. For notational convenience we set
\begin{align*}
 \mathbf{ \Psi } (\tau)  (\rho) :=
 \begin{pmatrix}
   \psi _{1} (\tau, \rho)  \\
   \psi _{2} (\tau, \rho)
 \end{pmatrix} .
\end{align*}
The blowup solution is given by
\begin{align*} 
\mathbf{  \Psi } ^{ \text{res}  } (\tau) (\rho)
=
\begin{pmatrix}
   \frac{T-t}{r} \psi ^{T} (t,r)  \\
    \frac{(T-t)^2}{r} \psi _{t}^{T} (t,r) 
 \end{pmatrix} \Bigg | _{
 (t,r)=\mu ^{-1} (\tau,\rho)
} =
\begin{pmatrix}
   \frac{1}{\rho} f_0 (\rho) \\
   f_0'(\rho) 
 \end{pmatrix}, 
\end{align*} 
i.e., it is static.
We linearize around $\mathbf{\Psi}^{\text{res}}$ by inserting the ansatz $\mathbf{ \Psi }=  \mathbf{  \Psi } ^{ \text{res} } + \mathbf{  \Phi } $ into \eqref{free}.
For brevity we write
\begin{align*} 
\eta (x):= \sin(2x) - 2x,\quad x \in \mathbb{R}
\end{align*}
and use Taylor's theorem to expand the nonlinearity around 
$\frac{1}{\rho}f_0(\rho)$. We get
\begin{align*}
\sin \left(  2 \rho \psi _{1}  \right) - 2 \rho \psi _{1} & = \eta \left( \rho \psi _{1} \right) = \eta \left( f_0 + \rho \phi _{1} \right) = \eta \left( f_0  \right) + \eta ^{\prime} \left( f_0  \right) \rho \phi _{1} + N(\rho \phi _{1}),
\end{align*}
where, by definition, 
\begin{align*}
N (\rho \phi _{1} ) := \eta( f_0  + \rho \phi _{1}) - \eta ( f_0 ) - \eta ^{\prime} ( f_0 ) \rho \phi _{1}.
\end{align*}
We plug the ansatz and the Taylor expansion into Eq.~\eqref{free} which yields the abstract evolution equation 
\begin{align} \label{Evolution}
\left\{
	\begin{array}{ll}
	\partial _{\tau} \mathbf{ \Phi } (\tau) = \widetilde{\mathbf{  L } } \big( \mathbf{ \Phi } (\tau) \big) + \mathbf{ N } \big(  \mathbf{ \Phi }( \tau ) \big ), & \mbox{for } \tau \in  (0,+\infty)  \\
	\mathbf{ \Phi } (0)= \mathbf{U}(\mathbf{v},T),  
         \end{array}
\right.
\end{align}
for the perturbation
\begin{align*}
\mathbf{ \Phi } (\tau) (\rho)= 
\begin{pmatrix}
    \phi _{1} (\tau, \rho)   \\
  \phi _{2} (\tau, \rho) 
 \end{pmatrix}
 = 
\begin{pmatrix}
    \psi _{1} (\tau, \rho) -\frac{1}{\rho} f_0 (\rho)   \\
  \psi _{2} (\tau, \rho) - f_0' (\rho)
 \end{pmatrix}
\end{align*}
where
\begin{align}
&\widetilde{  \mathbf{  L } }  := \widetilde{ \mathbf{  L  } }_{0}  + \mathbf{  L ^{\prime} }, \label{1} \\
& \widetilde{ \mathbf{  L  } }_{0} \mathbf u (\rho):=
\begin{pmatrix}
- \rho u_1'(\rho)-u_1(\rho)+ u_2(\rho)  \\
  \Delta _{\rho,d+2}^{ \text{rad} } u_1(\rho) - \rho u_2'(\rho) - 2 u_{2}(\rho)   
 \end{pmatrix},  \label{2}  \\
&  \mathbf{  L ^{\prime} } \mathbf u(\rho):=
\begin{pmatrix}
   0 \\
   - \frac{d-1}{2} \frac{\eta' ( f_0(\rho) ) }{\rho ^2} u_{1}(\rho), 
 \end{pmatrix}, \label{3} \\
&  \mathbf{  N }(\mathbf u) (\rho):= 
 \begin{pmatrix}
   0 \\
   - \frac{d-1}{2} \frac{N( \rho u_{1}(\rho) )}{\rho ^3} 
 \end{pmatrix},  \label{4} 
\end{align}
for $\mathbf u=(u_1,u_2)$ and
\begin{align*}
 \eta'(f_0(\rho))=2  \cos (2f_0(\rho)) -2  = -16(d-2) \frac{\rho ^2}{ \left( \rho^2 +d-2 \right)^2  }. 
\end{align*}
Furthermore, the initial data are given by
\begin{align}
\label{5}
\mathbf \Phi(0)(\rho)=\mathbf U(\mathbf v,T)(\rho)=
\left (\begin{array}{c}
\frac{1}{\rho}f_0(\frac{T}{T_0}\rho) \\
\frac{T^2}{T_0^2}f_0'(\frac{T}{T_0}\rho) \end{array} \right )
-\left ( \begin{array}{c}
\frac{1}{\rho}f_0(\rho) \\ f_0'(\rho) \end{array} \right )
+\mathbf V(\mathbf v,T)(\rho)
\end{align} 
where
\[  \mathbf{ V } (\mathbf{v},T) (\rho):= 
 \begin{pmatrix}
   \frac{1}{\rho} F (T \rho)   \\
   \frac{T}{\rho} G (T \rho)  
 \end{pmatrix},~ \mathbf{v} := 
\begin{pmatrix}
   F   \\
   G 
 \end{pmatrix} .
 \]

\subsection{Strong light-cone solutions} 
To proceed, we need to define what it means to be a solution to the evolution problem $\eqref{Evolution}$. We introduce the Hilbert space
\begin{align*}
\mathcal{H} := H_{\text{rad}}^{\frac{d+3}{2}} (\mathbb{B}^{d+2} ) \times H_{\text{rad}}^{\frac{d+1}{2}} (\mathbb{B}^{d+2} ).
\end{align*}
Below we prove that the closure of the operator $\widetilde{\mathbf L}$, augmented with a suitable domain, generates a semigroup $\mathbf S(\tau)$ on $\mathcal H$.
This allows us to formulate $\eqref{Evolution}$ as an abstract integral equation via Duhamel's formula,
\begin{align} \label{Duhamel}
\Phi(\tau)= \mathbf{S}(\tau) \mathbf{U}(\mathbf{v},T)+ \int _{0}^{\tau} \mathbf{S}(\tau - s) \mathbf{N} \big( \Phi (s) \big)  ds.
\end{align}
Eq.~\eqref{Duhamel} yields a natural notion of strong solutions in light-cones.
\begin{definition} \label{def}
We say that $\psi:C_{T} \longrightarrow \mathbb{R}$ is a solution to $\eqref{cauchy}$ if the corresponding $\Phi:[0,\infty) \longrightarrow \mathcal{H}$ belongs to $C \big( [0,\infty);\mathcal{H} \big)$ and satisfies $\eqref{Duhamel}$ for all $\tau \ge 0$.
\end{definition}

\section{Proof of the theorem} 
\subsection{Notation} Throughout we denote by $\sigma(\mathbf{L}),~\sigma_{p}(\mathbf{L})$ and $\sigma_{e}(\mathbf{L})$ the spectrum, point spectrum, and essential spectrum, respectively, of a linear operator $\mathbf{L}$. Furthermore, we write $\mathbf{R}_{\mathbf{L}}(\lambda):=\left(\lambda - \mathbf{L} \right)^{-1}$, $\lambda \in \rho(\mathbf{L})$, for the resolvent operator where $\rho (\mathbf{L}):=\mathbb{C} \setminus \sigma(\mathbf{L})$ stands for the resolvent set. As usual, $a \lesssim b$ means $a \leq cb$ for an absolute, strictly positive constant $c$ which may change from line to line. Similarly, we write $a \simeq b$ if $a \lesssim b$ and $b \lesssim a$.
\subsection{Functional setting}
In the following we consider radial Sobolev functions $\hat{u} : \mathbb{B}_{R}^{d+2} \rightarrow \mathbb{C}$, that is, $\hat{u} (\xi)=u(|\xi|)$ for all $\xi \in \mathbb{B}_{R}^{d+2}$ where 
$u:(0,R) \rightarrow \mathbb{C}$. In particular, we define
\begin{align*}
u \in  
 H_{\text{rad}}^{m} ( \mathbb{B}_{R}^{d+2} ) \iff \hat{u} \in H^{m}  (\mathbb{B}_{R}^{d+2} ) := W^{m,2}  (\mathbb{B}_{R}^{d+2} ). 
 \end{align*}
The function space $H_{\text{rad}}^{m} (\mathbb{B}_R^{d+2} )$ becomes a Banach space endowed with the norm
\begin{align*}
\| u \|_{{H^{m}_{\text{rad}}} ( \mathbb{B}_{R}^{d+2} )} =  \| \hat{u}  \|_{ {H}^{m}  (\mathbb{B}_{R}^{d+2} )}.
\end{align*}
From now, we shall not distinguish between $u(|\cdot|)$ and $\hat{u}$. In addition, we introduce the Hilbert space 
\begin{align} \label{H}
\mathcal{H} := H_{\text{rad}}^{m} (\mathbb{B}^{d+2} ) \times H_{\text{rad}}^{m-1} (\mathbb{B}^{d+2} ),\quad m \equiv m_{d}:=\frac{d+3}{2}
\end{align}
associated with the induced norm
\begin{align*}
\| \mathbf{u} \|^{2}  =  \left \| (u_{1},u_{2})  \right \|^{2}  := \| u_{1} \|_{H_{\text{rad}}^{m} (\mathbb{B}^{d+2} )}^{2} + \| u_{2} \|_{H_{\text{rad}}^{m-1} (\mathbb{B}^{d+2} ) }^{2}.
\end{align*}
\subsection{Well-posedness of the linearized problem} 
We start with the study of the linearized problem and we convince ourselves that it is well-posed. Recall that the linear operator is given by $\eqref{1}$. To proceed, we follow \cite{DonSch16} and define the domain of the free part by
\begin{align*}
\mathcal{D} (\widetilde{ \mathbf{L} }_{0})  := \Big \{ \mathbf{u} \in  C^{\infty}(0,1)^2 \cap \mathcal H: w_{2} \in C^{2}\left([0,1]\right),~w_{1} \in C^{3} \left([0,1] \right),~ w_{1}^{\prime \prime} (0)=0 \Big \},
\end{align*}
where, for all $\rho \in [0,1]$ and $j=1,2$, 
\begin{align*}
w_{j} (\rho) := D_{d+2} u_{j} (\rho) := \Big (  \frac{1}{\rho} \frac{d}{d\rho}  \Big )^{ \frac{d-1}{2} } \big( \rho ^{d} u_{j}(\rho) \big) = \sum _{n=0}^{ \frac{d-1}{2} } c _{n} \rho^{n+1} u_{j}^{(n)} (\rho),
\end{align*}
for some strictly positive constants $c_{n}~ (n=0,1,\dots,\frac{d-1}{2})$. Note that the density of $C^{\infty} ( \overline{ \mathbb{B}^{d+2} } )$ in $H^{m}(\mathbb{B}^{d+2})$ implies the density of
\begin{align*}
\big( C_{\text{even}}^{\infty} [0,1] \big)^2 := \Big \{  \mathbf{u} \in \big( C^{\infty} [0,1] \big)^2:~~\mathbf{u}^{(2k+1)}(0)=0,~~k=0,1,2,\dots \Big \}  \subset \mathcal{D} ( \widetilde{ \mathbf{L} }_{0})
\end{align*}  
in $\mathcal{H}$ which in turn proves the density of $\mathcal{D} ( \widetilde{ \mathbf{L} }_{0})$ in $\mathcal{H}$. In other words, $\overline{ \mathcal{D} ( \widetilde{ \mathbf{L} }_{0}) } = \mathcal{H}$ and $\widetilde{ \mathbf{L} }_{0}$ is densely defined. 

\begin{prop} \label{growthestimatelinear}
The operator $\widetilde{ \mathbf{L} }_{0}: \mathcal{D}(\widetilde{ \mathbf{L} }_{0}) \subset \mathcal{H} \longrightarrow \mathcal{H} $ is closable and its closure $ \mathbf{L} _{0} : \mathcal{D}(\mathbf{L} _{0}) \subset \mathcal{H} \longrightarrow \mathcal{H} $ generates a strongly continuous one-parameter semigroup $( \mathbf{S}_{0}(\tau) )_{\tau \ge 0}$ of bounded operators on $\mathcal{H}$ satisfying the growth estimate
\begin{align} \label{growth}
\|  \mathbf{S} _{0} (\tau) \| \leq M e^{-\tau}
\end{align}
for all $\tau\geq 0$ and some constant $M\geq 1$.
In addition, the operator $\mathbf{L}:= \mathbf{L}_{0} + \mathbf{L}^{\prime}: \mathcal{D}(\mathbf{L}) \subset \mathcal{H} \longrightarrow \mathcal{H},~\mathcal{D}(\mathbf{L}) =\mathcal{D}(\mathbf{L}_{0})$, is the generator of a strongly continuous semigroup $( \mathbf{S}(\tau) )_{\tau \ge 0}$ on $\mathcal H$
and $\mathbf L': \mathcal H\to \mathcal H$ is compact.
\end{prop}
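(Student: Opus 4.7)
My plan follows the scheme of \cite{DonSch16}. Since $d$ is odd, so is $d+2$, which makes the descent operator $D_{d+2}$ particularly useful: it essentially identifies the radial Sobolev setting on $\mathbb{B}^{d+2}$ with a weighted Sobolev space in one variable for the images $w_j = D_{d+2}u_j$, and it intertwines the radial Laplacian with an ordinary second derivative. I would begin by installing on $\mathcal{H}$ an inner product that is equivalent to the standard one but expressed directly in the variables $w_1, w_2$; this reduces all computations with $\widetilde{\mathbf{L}}_0$ to manageable one-dimensional integration-by-parts manipulations.

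\textbf{Step 2: dissipativity.} The principal task is to show that, in the equivalent inner product,
\[
\mathrm{Re}\,(\widetilde{\mathbf{L}}_0\mathbf{u},\mathbf{u})_{\mathcal{H}} \le -\|\mathbf{u}\|_{\mathcal{H}}^2, \qquad \mathbf{u}\in\mathcal{D}(\widetilde{\mathbf{L}}_0).
\]
The scaling terms $-\rho\partial_\rho u_j$ produce, after integration by parts, bulk terms of the right sign plus boundary contributions at $\rho=1$; the zeroth-order terms $-u_1,-2u_2$ supply additional negative contributions; and the Laplacian $\Delta^{\mathrm{rad}}_{\rho,d+2}u_1$ becomes essentially $w_1''$ under $D_{d+2}$ and combines with the cross term $u_2$ through a symmetric cancellation when the relative weights are chosen correctly. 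The condition $w_1''(0)=0$ in $\mathcal{D}(\widetilde{\mathbf{L}}_0)$ kills the parasitic boundary contribution at the origin. Balancing the various boundary terms at $\rho=1$ against bulk quantities by a careful choice of weights will be the main technical obstacle.

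\textbf{Step 3: range density and Lumer--Phillips.} For $\lambda>0$ sufficiently large and $\mathbf{f}$ in a dense subset of $\mathcal{H}$, the equation $(\lambda - \widetilde{\mathbf{L}}_0)\mathbf{u} = \mathbf{f}$ is solved by using the first component to express $u_2$ in terms of $u_1$ and substituting into the second, yielding a linear second-order ODE for $u_1$ with regular singular points at $\rho=0$ and $\rho=1$. A Frobenius analysis together with variation of parameters produces a classical solution matching the boundary behaviour encoded in $\mathcal{D}(\widetilde{\mathbf{L}}_0)$. Combined with Step~2, the Lumer--Phillips theorem then yields closability of $\widetilde{\mathbf{L}}_0$ and generation by its closure $\mathbf{L}_0$ of a contraction semigroup $(\mathbf{S}_0(\tau))_{\tau\ge 0}$ in the equivalent norm; returning to the standard norm gives \eqref{growth} with some $M\ge 1$.

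\textbf{Step 4: compactness of $\mathbf{L}'$ and generation for $\mathbf{L}$.} The explicit formula $\eta'(f_0(\rho))/\rho^2=-16(d-2)(\rho^2+d-2)^{-2}$ shows that the coefficient in $\mathbf{L}'$ is a $C^\infty$ function on $[0,1]$, so $\mathbf{L}'$ maps $\mathcal{H}$ boundedly into $\{0\}\times H^m_{\mathrm{rad}}(\mathbb{B}^{d+2})$ with $m=\tfrac{d+3}{2}$. The Rellich embedding $H^m_{\mathrm{rad}}(\mathbb{B}^{d+2})\hookrightarrow H^{m-1}_{\mathrm{rad}}(\mathbb{B}^{d+2})$ is compact, hence so is $\mathbf{L}':\mathcal{H}\to\mathcal{H}$. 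Since $\mathbf{L}'$ is in particular bounded, the bounded perturbation theorem for $C_0$-semigroups delivers the semigroup $(\mathbf{S}(\tau))_{\tau\ge 0}$ generated by $\mathbf{L}=\mathbf{L}_0+\mathbf{L}'$ on $\mathcal{D}(\mathbf{L}_0)$, completing the proof.
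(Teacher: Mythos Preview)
Your proposal is correct and follows essentially the same route as the paper. Steps~1--3 are precisely the content of \cite[Proposition~4.9]{DonSch16}, which the paper simply invokes (with $d\mapsto d+2$, $p=3$) rather than reproving. For Step~4 there is a minor but genuine difference: the paper argues compactness by passing through the descent operator $D_{d+2}$, using that $\|\mathbf{L}'\mathbf{u}\|\lesssim \|D_{d+2}u_1\|_{H^1(0,1)}$ together with the one-dimensional compact embedding $H^2(0,1)\hookrightarrow H^1(0,1)$; you instead observe that the smooth even potential keeps $Vu_1$ in $H^m_{\mathrm{rad}}(\mathbb{B}^{d+2})$ and apply Rellich directly on the ball. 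Your argument is cleaner and avoids the auxiliary one-dimensional machinery, while the paper's version stays consistent with the $D_{d+2}$ framework already set up for the dissipativity estimate.
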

\begin{proof}
The fact that $\widetilde{ \mathbf{L} }_{0}$ is closable and its closure generates a semigroup satisfying the growth estimate $\eqref{growth}$ follows from Proposition 4.9 in \cite{DonSch16} by replacing $d$ in \cite{DonSch16} with $d+2$ and setting $p=3$.  It remains to apply the Bounded Perturbation Theorem to show that $\mathbf{L}:=\mathbf{L}_{0} + \mathbf{L}^{\prime}$ is the generator of a strongly continuous semigroup $( \mathbf{S}(\tau) )_{\tau \ge 0}$. In fact, we prove that $\mathbf{L}^{\prime}: \mathcal{H} \longrightarrow \mathcal{H}$, defined in $\eqref{3}$,  is compact. We pick an arbitrary sequence $(\mathbf{u}_{n})_{n \in \mathbb{N}} \subseteq \mathcal{H}$ that is uniformly bounded. By Lemma 4.2 in \cite{DonSch16}, $(D_{d+2} u_{1,n} )_{n \in \mathbb{N}}$ is uniformly bounded in $H^{2} (0,1)$ and the compactness of the Sobolev embedding $H^{2}(0,1) \xhookrightarrow{}  H^{1}(0,1)$ implies the existence of a subsequence, again denoted by $(D_{d+2} u_{1,n} )_{n \in \mathbb{N}}$, which is Cauchy in $H^{1}(0,1)$. Hence, for any $n,m \in \mathbb{N}$ sufficiently large, we get
\begin{align*}
\| \mathbf{L}^{\prime} \mathbf{u}_{n} - \mathbf{L}^{\prime}
  \mathbf{u}_{m} \| & \lesssim  \left \| \frac{ \eta' 
\circ f_0 }{|\cdot|^2} \right \|_{W^{1,\infty}(0,1)} \| D_{d+2} u_{1,n} - D_{d+2} u_{1,m} \|_{H^{1}(0,1)} \\
&  \simeq  \left \| \frac{  1}{ (|\cdot|^{2} + d-2)^2  }\right \|_{W^{1,\infty}(0,1)} \|D_{d+2} u_{1,n} - D_{d+2} u_{1,m} \|_{H^{1}(0,1)} \\
& \simeq   \| D_{d+2} u_{1,n} - D_{d+2} u_{1,m} \|_{H^{1}(0,1)},
\end{align*}
which shows that $(\mathbf{L}^{\prime} \mathbf{u}_{n})_{n \in \mathbb{N}}$ is Cauchy in $\mathcal{H}$. This proves that $\mathbf{L}^{\prime}$ is compact.
\end{proof}
\subsection{The spectrum of the free operator} \label{O1}
We can use the previous decay estimate for the semigroup $( \mathbf{S}_{0}(\tau) )_{\tau \ge 0}$ to locate the spectrum of the free operator $\mathbf L_0$. Indeed, by \cite{EngNag00}, p.~55, Theorem 1.10, we immediately infer
\begin{equation}
\label{spectrumLo}
 \sigma(\mathbf L_0)\subseteq \{\lambda\in \mathbb C: \text{Re}\lambda\leq -1\}. 
 \end{equation}

\subsection{The spectrum of the full linear operator} Next, we need to derive a suitable growth estimate for the semigroup $\mathbf{S}(\tau)$ and therefore turn our attention to the spectrum of the operator $\mathbf{L}$. To begin with, we consider the point spectrum.
\begin{prop} \label{OhaioProp}
We have
\begin{align*}
\sigma_p (\mathbf{L}) \subseteq \{ \lambda \in \mathbb{C}:~~\mathrm{Re} \lambda <0 \} \cup \{1\}.
\end{align*}
\end{prop}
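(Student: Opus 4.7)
The plan is to reduce the eigenvalue equation $\mathbf{L}\mathbf{u}=\lambda\mathbf{u}$ to a scalar second-order ODE and then analyze the admissible local behavior at its endpoints. Eliminating $u_{2}$ via the first component, which gives $u_{2}(\rho)=(\lambda+1)u_{1}(\rho)+\rho u_{1}'(\rho)$, and substituting into the second produces
\begin{equation*}
(1-\rho^{2})u_{1}''(\rho)+\left[\frac{d+1}{\rho}-2(\lambda+2)\rho\right]u_{1}'(\rho)-\left[(\lambda+1)(\lambda+2)-\frac{8(d-1)(d-2)}{(\rho^{2}+d-2)^{2}}\right]u_{1}(\rho)=0
\end{equation*}
on $(0,1)$, whose only singularities in the closed interval $[0,1]$ are the regular singular points $\rho=0$ and $\rho=1$.

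The next step is a Frobenius analysis at the two endpoints. At $\rho=0$ the indicial roots are $\{0,-d\}$; since $\mathcal{H}\hookrightarrow L^{2}_{\mathrm{rad}}(\mathbb{B}^{d+2})$ excludes the $\rho^{-d}$ branch for $d\geq 3$, every eigenfunction must be analytic at the origin. At $\rho=1$ the roots are $\{0,\tfrac{d-1}{2}-\lambda\}$; for $\mathrm{Re}\,\lambda\geq 0$ the second root has real part at most $\tfrac{d-1}{2}$, which is strictly less than $\tfrac{d+3}{2}-\tfrac{1}{2}$, so the corresponding non-analytic Frobenius branch (including any $\log(1-\rho)$ correction that may arise when the two roots differ by a non-negative integer) does not belong to $H^{(d+3)/2}$ near $\rho=1$. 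Consequently, in the closed right half-plane every eigenfunction corresponds to a solution of the above ODE that is analytic on all of $[0,1]$.

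The presence of $\lambda=1$ in the spectrum is forced by the symmetry $T\mapsto \psi^{T}$: differentiating this one-parameter family of blowup solutions at $T=T_{0}$ and transporting the result to similarity coordinates produces the explicit smooth eigenfunction
\begin{equation*}
u_{1}(\rho)=\frac{f_{0}(\rho)}{\rho}-f_{0}'(\rho),\qquad u_{2}(\rho)=2u_{1}(\rho)-\rho u_{1}'(\rho),
\end{equation*}
which one verifies directly satisfies $\mathbf{L}\mathbf{u}=\mathbf{u}$. The main obstacle is therefore to rule out any further eigenvalue in $\{\mathrm{Re}\,\lambda\geq 0\}$. Under the substitution $z=\rho^{2}$ the ODE becomes a Heun-type equation with four regular singular points $z\in\{0,1,-(d-2),\infty\}$, so closed-form hypergeometric connection formulae are unavailable for general odd $d$. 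I would close the argument by exploiting the strict positivity of the potential $\tfrac{8(d-1)(d-2)}{(\rho^{2}+d-2)^{2}}$ via a weighted integration-by-parts identity along the lines developed in \cite{CosDonXia16, CosDonGlo16, DonSch16}, which should preclude the existence of globally analytic eigenfunctions in the right half-plane other than at $\lambda=1$.
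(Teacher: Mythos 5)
Your reduction to the scalar ODE for $u_{1}$, the Frobenius indices at $\rho=0$ and $\rho=1$, and the conclusion that any eigenfunction with $\mathrm{Re}\,\lambda\geq 0$ must be analytic on all of $[0,1]$ all match the paper's argument (the paper passes to $v_{1}=\rho u_{1}$ only so that the resulting equation coincides verbatim with the spectral equation of \cite{CosDonXia16,CosDonGlo16}). The problem is the last step, which is precisely the hard core of the proposition. Ruling out analytic solutions on $[0,1]$ for $\mathrm{Re}\,\lambda\geq 0$, $\lambda\neq 1$, is the mode stability theorem of \cite{CosDonXia16} ($d=3$) and \cite{CosDonGlo16} ($d\geq 4$), and the paper simply invokes it. Your proposal instead sketches a ``weighted integration-by-parts identity exploiting the strict positivity of the potential,'' flagged only as something that ``should'' work. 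This cannot succeed as stated: the operator genuinely possesses the unstable eigenvalue $\lambda=1$ with a smooth eigenfunction, so any monotonicity or positivity identity valid on the whole right half-plane would also exclude $\lambda=1$ and is therefore false. One must first remove the gauge mode (e.g.\ via the supersymmetric factorization used in the appendix and in \cite{CosDonGlo16}) and even then the remaining analysis is a substantial piece of work (quasi-solutions and explicit connection-type arguments), not an energy identity. As written, the decisive step of the proof is assumed rather than proved; the honest fix is to transform to the cited normal form and quote the mode stability theorems as a black box, exactly as the paper does.

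A secondary error: the explicit $\lambda=1$ eigenfunction you display is not an eigenfunction. Differentiating the family $T\mapsto\psi^{T}$ and transporting to the rescaled variables gives first component proportional to $f_{0}'(\rho)=\frac{2\sqrt{d-2}}{\rho^{2}+d-2}$, i.e.\ the function $g_{1}$ of Eq.~\eqref{g}, not $\frac{f_{0}(\rho)}{\rho}-f_{0}'(\rho)$ (the latter vanishes to second order at $\rho=0$, whereas every admissible solution of the $\lambda=1$ equation with nonzero value at the origin is a multiple of $g_{1}$); you also have a sign error in the relation $u_{2}=(\lambda+1)u_{1}+\rho u_{1}'$ when you specialize to $\lambda=1$. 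This does not affect the stated inclusion, which only requires an upper bound on $\sigma_{p}(\mathbf{L})$, but it would matter for the later analysis of the unstable subspace.
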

\begin{proof}
We argue by contradiction and assume there exists a $\lambda \in \sigma_p(\mathbf{L}) \setminus \{ 1\}$ with $\mathrm{Re}\lambda \geq 0$. The latter means that there exists an element $\mathbf{u}=(u_{1},u_{2}) \in \mathcal{D} (\mathbf{L}) \setminus \{ 0 \}$ such that $\mathbf{u} \in$ $\ker(\lambda-\mathbf{L})$. A straightforward calculation shows that the spectral equation $(\lambda-\mathbf{L}) \mathbf{u} =0$ implies
\begin{small}
\begin{align*}
\big (1-\rho ^2 \big) u_{1}^{\prime \prime} (\rho) + \Bigg( \frac{d+1}{\rho} - 2(\lambda +2) \rho \Bigg)u_{1}^{\prime} (\rho)- \Bigg( (\lambda + 1) (\lambda +2)+ \frac{d-1}{2} V(\rho) \Bigg )  u_{1} (\rho) =0,
\end{align*} 
\end{small}
for $\rho \in (0,1)$, where 
\begin{align*}
V(\rho):=\frac{\eta'(f_0(\rho))}{\rho ^2} = \frac{-16(d-2)}{(\rho^2+d-2 )^2}.
\end{align*}
Since $\mathbf u\in \mathcal H$, we see that $u_{1}$ must lie in $H_{\text{rad}}^{\frac{d+3}{2}} (\mathbb{B}^{d+2})$. To proceed, we set $v_{1}(\rho):=\rho u_{1}(\rho)$. A straightforward computation implies that $v_{1}$ solves the second order ordinary differential equation
\begin{align} 
\label{eq:specode}
\big (1-\rho ^2 \big) v_{1}^{\prime \prime} (\rho) + \Bigg( \frac{d-1}{\rho} - 2(\lambda +1) \rho \Bigg)v_{1}^{\prime} (\rho)- \Bigg( \lambda  (\lambda +1)+ \frac{d-1}{2}\hat{V}(\rho) \Bigg )  v_{1} (\rho) =0,
\end{align}
for $\rho \in (0,1)$, where 
\begin{align*}
\hat{V}(\rho):= 2\frac{ \rho ^4 -6(d-2)\rho ^2 +(d-2)^2 }{\rho ^2 ( \rho^2 +d-2 )^2   }.
\end{align*}
We remark that this is the spectral equation studied in \cite{CosDonXia16, CosDonGlo16}.
Since all coefficients in \eqref{eq:specode} are smooth functions in $(0,1)$, we immediately get the a priori regularity $v_{1} \in C^{\infty}(0,1)$. 
We claim that $v_1\in C^\infty[0,1]$. To prove this, we employ Frobenius' method. The point $\rho =0$ is a regular singularity with Frobenius indices $s_{1}=1$ and $s_{2}=-(d-1)$. Therefore, by Frobenius theory, there exists a solution of the form
\begin{align*}
v^{1}_{1} (\rho)=\rho \sum _{i=0}^{\infty} x_{i} \rho ^{i} = \sum _{i=0}^{\infty} x_{i} \rho ^{i+1}, 
\end{align*} 
which is analytic locally around $\rho=0$. Moreover, since $s_{1}-s_{2}=d \in \mathbb{N}_{\text{odd}}$, there exists a second linearly independent solution of the form
\begin{align*}
v^{2}_{1} (\rho) &=C \log(\rho) v^{1}_{1} (\rho)  + \rho ^{-(d-1)} \sum _{i=0}^{\infty} y_{i} \rho ^{i}
\end{align*}
for some constant $C\in \mathbb C$ and $y_0=1$.
However, $v^{2}_{1}(\rho)/\rho$ does not lie in the Sobolev space $H_{\text{rad}}^{\frac{d+3}{2}} (\mathbb{B}^{d+2} )$ due to the strong singularity in the second term, no matter the value of the constant $C$. Consequently, $v_1$ must be a multiple of $v_1^1$ and we infer $v_1\in C^\infty[0,1)$. Similarly, the point $\rho =1$ is a regular singularity with Frobenius indices 
$s_{1}=0$ and $s_{2}=\frac{d-1}{2}-\lambda$.
Now we need to distinguish different cases. If $\frac{d-1}{2} -\lambda \notin \mathbb{Z}$, we have two linearly independent solutions of the form
\begin{align*}
& v_{1} ^{1}(\rho) =\sum_{i=0}^\infty x_i (1-\rho)^i, \\
& v_{1} ^{2}(\rho)=(1-\rho)^{\frac{d-1}{2}-\lambda}\sum_{i=0}^\infty y_i (1-\rho)^i
\end{align*} 
with $x_0=y_0=1$. The solution $v_{1} ^{2}(\rho)/\rho$ does not belong to the Sobolev space $H_{\text{rad}}^{\frac{d+3}{2}} (\mathbb{B}^{d+2} )$ and thus, $v_1\in C^\infty[0,1]$. In the case
 $\frac{d-1}{2} - \lambda:=k \in \mathbb{N}_{0}$, we have two fundamental solutions of the form
\begin{align*}
 v_{1} ^{1}(\rho)&=(1-\rho)^{k}\sum_{i=0}^\infty x_i(1-\rho)^i,\qquad x_0=1 \\
 v_{1} ^{2}(\rho)&= \sum_{i=0}^\infty y_i (1-\rho)^i+C\log (1-\rho)v_1^1(\rho),\qquad y_0=1
\end{align*} 
near $\rho= 1$. By assumption, $\mathrm{Re}\lambda\geq 0$ and thus, $k \leq \frac{d-1}{2}$. Hence, $v_1^2(\rho)/\rho$ does not lie in the Sobolev space $H_{\text{rad}}^{\frac{d+3}{2}} (\mathbb{B}^{d+2} )$ unless $C=0$ and we conclude $v_1\in C^\infty[0,1]$.  
Finally, if $\frac{d-1}{2}-\lambda=:-k$ is a negative integer, the fundamental system around $\rho=1$ has the form
\begin{align*} 
v_1^1(\rho)&=\sum_{i=0}^\infty x_i(1-\rho)^i \\
v_1^2(\rho)&=C\log(1-\rho)v_1^1(\rho)+(1-\rho)^{-k}\sum_{i=0}^\infty y_i (1-\rho)^i
\end{align*}
with $x_0=y_0=1$.
Again, $v_1^2(\rho)/\rho$ does not belong to $H_{\text{rad}}^{\frac{d+3}{2}} (\mathbb{B}^{d+2} )$ and we infer $v_1\in C^\infty[0,1]$ also in this case.
In summary, we have found a nontrivial solution $v_1\in C^\infty[0,1]$ to Eq.~\eqref{eq:specode} with $\mathrm{Re}\lambda \geq 0$, $\lambda\not=1$, but this contradicts \cite{CosDonXia16, CosDonGlo16}.
\end{proof}

The fact that $\mathbf L'$ is compact implies that the result on the point spectrum from Proposition \ref{OhaioProp} is already sufficient to obtain the same information on the full spectrum.

\begin{cor}
\label{cor:spec}
We have
 \begin{align*}
\sigma (\mathbf{L}) \subseteq \{ \lambda \in \mathbb{C}:~~\mathrm{Re} \lambda <0 \} \cup \{1\}.
\end{align*}
\end{cor}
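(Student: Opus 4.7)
The plan is to upgrade the point-spectrum information from Proposition \ref{OhaioProp} to the full spectrum by exploiting the compactness of $\mathbf{L}'$ together with the spectral bound \eqref{spectrumLo} for the free operator $\mathbf{L}_0$. In effect, this is a Weyl/Fredholm-type argument: since $\mathbf{L}=\mathbf{L}_0+\mathbf{L}'$ and $\mathbf{L}'$ is compact, the essential spectrum of $\mathbf{L}$ coincides with that of $\mathbf{L}_0$, and in particular remains confined to $\{\mathrm{Re}\,\lambda\leq -1\}$. Hence any point of $\sigma(\mathbf{L})$ lying in the closed right half-plane must be an eigenvalue, and Proposition \ref{OhaioProp} then pins it down to $\lambda=1$.

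More concretely, I would fix $\lambda\in\mathbb{C}$ with $\mathrm{Re}\,\lambda\geq 0$. By \eqref{spectrumLo} we have $\lambda\in\rho(\mathbf{L}_0)$, so the resolvent $\mathbf{R}_{\mathbf{L}_0}(\lambda)=(\lambda-\mathbf{L}_0)^{-1}$ is a bounded operator on $\mathcal{H}$. Since $\mathcal{D}(\mathbf{L})=\mathcal{D}(\mathbf{L}_0)$, one can factor
\begin{equation*}
\lambda-\mathbf{L}=(\lambda-\mathbf{L}_0)-\mathbf{L}'=(\lambda-\mathbf{L}_0)\bigl[\mathbf{I}-\mathbf{R}_{\mathbf{L}_0}(\lambda)\mathbf{L}'\bigr].
\end{equation*}
The operator $\mathbf{R}_{\mathbf{L}_0}(\lambda)\mathbf{L}':\mathcal{H}\to\mathcal{H}$ is compact, being the composition of the bounded resolvent with the compact perturbation $\mathbf{L}'$ supplied by Proposition \ref{growthestimatelinear}. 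By the Fredholm alternative applied to $\mathbf{I}-\mathbf{R}_{\mathbf{L}_0}(\lambda)\mathbf{L}'$, either this operator is boundedly invertible on $\mathcal{H}$, in which case $\lambda\in\rho(\mathbf{L})$, or it has nontrivial kernel, in which case one immediately reads off $\ker(\lambda-\mathbf{L})\neq\{0\}$ and thus $\lambda\in\sigma_p(\mathbf{L})$.

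Combining these two cases gives the dichotomy $\{\mathrm{Re}\,\lambda\geq 0\}\subseteq\rho(\mathbf{L})\cup\sigma_p(\mathbf{L})$. Intersecting with Proposition \ref{OhaioProp}, which forces any eigenvalue with nonnegative real part to equal $1$, yields
\begin{equation*}
\sigma(\mathbf{L})\cap\{\lambda\in\mathbb{C}:\mathrm{Re}\,\lambda\geq 0\}\subseteq\{1\},
\end{equation*}
which is precisely the assertion of the corollary. I do not anticipate a serious obstacle here: all the genuinely new analytic work (the semigroup estimate for $\mathbf{L}_0$, the compactness of $\mathbf{L}'$, and the Frobenius/spectral-ODE analysis ruling out unstable eigenvalues other than $\lambda=1$) has already been carried out; the corollary is essentially a clean application of the analytic Fredholm alternative to upgrade $\sigma_p$ to $\sigma$.
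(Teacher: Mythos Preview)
Your proof is correct and follows essentially the same approach as the paper: both exploit the compactness of $\mathbf{L}'$ and the fact that $\lambda\in\rho(\mathbf{L}_0)$ for $\mathrm{Re}\,\lambda\geq 0$ to reduce the question to the Fredholm alternative, and then invoke Proposition~\ref{OhaioProp}. The only cosmetic difference is that the paper factors $\lambda-\mathbf{L}=[1-\mathbf{L}'\mathbf{R}_{\mathbf{L}_0}(\lambda)](\lambda-\mathbf{L}_0)$ (compact factor on the left) whereas you factor $(\lambda-\mathbf{L}_0)[\mathbf{I}-\mathbf{R}_{\mathbf{L}_0}(\lambda)\mathbf{L}']$ (compact factor on the right); both are equivalent routes to the same conclusion.
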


\begin{proof}
Suppose there exists a $\lambda\in \sigma(\mathbf L)\setminus \{1\}$ with $\mathrm{Re}\lambda\geq 0$.
Then $\lambda\notin\sigma(\mathbf L_0)$ and thus, $\mathbf R_{\mathbf L_0}(\lambda)$ exists.
From the identity $\lambda-\mathbf L=[1-\mathbf L'\mathbf R_{\mathbf L_0}(\lambda)](\lambda-\mathbf L_0)$ we see that $1\in \sigma(\mathbf L'\mathbf R_{\mathbf L_0}(\lambda))$. Since $\mathbf L'\mathbf R_{\mathbf L_0}(\lambda)$ is compact, it follows that $1\in \sigma_p(\mathbf L'\mathbf R_{\mathbf L_0}(\lambda))$ and thus, there exists a nontrivial $\mathbf f \in \mathcal H$ such that $[1-\mathbf L'\mathbf R_{\mathbf L_0}(\lambda)]\mathbf f=0$.
Consequently, $\mathbf u:=\mathbf R_{\mathbf L_0}(\lambda)\mathbf f\not= 0$
satisfies $(\lambda-\mathbf L)\mathbf u=0$ and thus, $\lambda\in \sigma_p(\mathbf L)$.
This contradicts Proposition \ref{OhaioProp}.
\end{proof}

Next, we provide a uniform bound on the resolvent. 
To this end, we define
\begin{align*}
\Omega _{\epsilon,R} :=  \{  \lambda \in \mathbb{C}:~~~\text{Re}\lambda \geq -1+\epsilon, |\lambda|\geq R \}  
\end{align*}
for $\epsilon, R >0$.

\begin{prop} \label{O2}
Let $\epsilon >0$. Then there exist constants $R_{\epsilon}, C_{\epsilon}>0$ such that the resolvent $\mathbf{R}_{\mathbf{L}}$ exists on $\Omega _{\epsilon,R_\epsilon}$ and satisfies
\begin{align*}
\| \mathbf{R} _{\mathbf{L} } (\lambda)\| \leq C_{\epsilon}
\end{align*}
for all $\lambda\in \Omega_{\epsilon,R_\epsilon}$.
\end{prop}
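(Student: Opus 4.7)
The plan is to reduce the resolvent bound for $\mathbf L$ to that of $\mathbf L_0$ via the factorization
\[
\lambda-\mathbf L = \bigl[1 - \mathbf L'\mathbf R_{\mathbf L_0}(\lambda)\bigr](\lambda-\mathbf L_0),
\]
already exploited in Corollary \ref{cor:spec}. Integrating the Laplace representation of $\mathbf R_{\mathbf L_0}(\lambda)$ against \eqref{growth} yields the uniform bound $\|\mathbf R_{\mathbf L_0}(\lambda)\| \leq M/\epsilon$ throughout the half-plane $\{\mathrm{Re}\lambda \geq -1+\epsilon\}$, so the proposition reduces to establishing $\|\mathbf L'\mathbf R_{\mathbf L_0}(\lambda)\|\to 0$ as $|\lambda|\to\infty$ in that half-plane. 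Once this is granted, choosing $R_\epsilon$ so that $\|\mathbf L'\mathbf R_{\mathbf L_0}(\lambda)\|\leq 1/2$ on $\Omega_{\epsilon,R_\epsilon}$, the Neumann series inverts $1-\mathbf L'\mathbf R_{\mathbf L_0}(\lambda)$ with norm at most $2$, so $\mathbf R_{\mathbf L}(\lambda) = \mathbf R_{\mathbf L_0}(\lambda)[1-\mathbf L'\mathbf R_{\mathbf L_0}(\lambda)]^{-1}$ exists and satisfies $\|\mathbf R_{\mathbf L}(\lambda)\| \leq 2M/\epsilon =: C_\epsilon$.

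The vanishing is most easily obtained by passing to adjoints. Since $\mathcal H$ is Hilbert, $\mathbf S_0(\tau)^*$ is again a $C_0$-semigroup with generator $\mathbf L_0^*$ and the same exponential bound, so $\mathbf R_{\mathbf L_0}(\lambda)^* = \mathbf R_{\mathbf L_0^*}(\bar\lambda)$ inherits the uniform bound $M/\epsilon$. For $\varphi$ in the dense subspace $\mathcal D(\mathbf L_0^*)$, the identity
\[
\mathbf R_{\mathbf L_0^*}(\bar\lambda)\varphi = \bar\lambda^{-1}\bigl(\varphi + \mathbf R_{\mathbf L_0^*}(\bar\lambda)\mathbf L_0^*\varphi\bigr)
\]
gives $\|\mathbf R_{\mathbf L_0}(\lambda)^*\varphi\|=O(|\lambda|^{-1})$, and density combined with the uniform bound upgrades this to the strong pointwise vanishing $\mathbf R_{\mathbf L_0}(\lambda)^*\varphi\to 0$ for every $\varphi \in \mathcal H$, as $|\lambda|\to\infty$ within the half-plane.

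To finish, note that $(\mathbf L'\mathbf R_{\mathbf L_0}(\lambda))^* = \mathbf R_{\mathbf L_0}(\lambda)^*(\mathbf L')^*$ is the composition of a uniformly bounded family tending strongly to zero with the compact operator $(\mathbf L')^*$. The operator $(\mathbf L')^*$ maps the closed unit ball to a precompact set, on which any uniformly bounded strongly-vanishing family tends to zero uniformly (by an $\varepsilon$-net argument). Hence the composition converges to zero in operator norm, and taking adjoints yields $\|\mathbf L'\mathbf R_{\mathbf L_0}(\lambda)\|\to 0$, as required.

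The main obstacle is precisely this vanishing step. One cannot argue directly that ``compact $\circ$ bounded-and-strongly-to-zero $=$ norm-to-zero'', because in general it is the \emph{opposite} composition order that converges in norm (witness any rank-one $K$ whose range contains the range of the approximating family). Passing to the adjoint, as above, swaps the composition order and makes the argument go through; everything else is routine Neumann-series bookkeeping.
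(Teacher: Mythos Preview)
Your argument is correct and takes a genuinely different route from the paper's. Both proofs share the same skeleton---the factorization $\lambda-\mathbf L=[1-\mathbf L'\mathbf R_{\mathbf L_0}(\lambda)](\lambda-\mathbf L_0)$, the uniform resolvent bound $\|\mathbf R_{\mathbf L_0}(\lambda)\|\leq M/\epsilon$ coming from \eqref{growth}, and a Neumann-series inversion once $\|\mathbf L'\mathbf R_{\mathbf L_0}(\lambda)\|$ is small. The difference lies in how the smallness is obtained. The paper argues concretely: writing out the first component of the equation $(\lambda-\mathbf L_0)\mathbf u=\mathbf f$ gives $(\lambda+1)u_1=u_2-\rho u_1'+f_1$, and since $\mathbf L'\mathbf u$ depends only on $u_1$ through a smooth potential, one estimates directly in $H^{m-1}_{\mathrm{rad}}$ to get the explicit rate $\|\mathbf L'\mathbf R_{\mathbf L_0}(\lambda)\|\lesssim |\lambda+1|^{-1}$. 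Your approach is purely abstract: you use only that $\mathbf L'$ is compact and that the adjoint semigroup yields strong vanishing of $\mathbf R_{\mathbf L_0}(\lambda)^*$, then exploit the $\varepsilon$-net argument on the precompact image of $(\mathbf L')^*$. The adjoint passage is the right move---as you note, the composition order matters, and your counterexample discussion is to the point. Your method is more general (it applies verbatim to any compact perturbation of a semigroup generator with a spectral gap) but yields no quantitative decay rate; the paper's hands-on estimate is specific to the structure of $\mathbf L'$ but gives the sharp $O(|\lambda|^{-1})$ bound.
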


\begin{proof}
Fix $\epsilon>0$ and take $\lambda\in \Omega_{\epsilon,R}$ for an arbitrary $R>0$. 
Then $\lambda\in \rho(\mathbf L_0)$ and the identity $(\lambda-\mathbf L)=[1-\mathbf L' \mathbf R_{\mathbf L_0}(\lambda)](\lambda-\mathbf L_0)$ shows that
$\mathbf R_{\mathbf L}(\lambda)$ exists if and only if
$1-\mathbf L'\mathbf R_{\mathbf L_0}(\lambda)$ is invertible. 
By a Neumann series argument this is the case if $\|\mathbf L'\mathbf R_{\mathbf L_0}(\lambda)\|<1$.

 To prove smallness of $\mathbf{L}^{\prime} \mathbf{R}_{\mathbf{L}_{0}} (\lambda)$, we recall the definition of $\mathbf L'$, Eq.~\eqref{3}, 
\begin{align*}
\mathbf{  L ^{\prime} } \mathbf{u} (\rho)=
\begin{pmatrix}
   0 \\
   - \frac{d-1}{2} V(\rho )u _{1} (\rho)
 \end{pmatrix}, 
\quad V(\rho)=\frac{\eta'(f_0(\rho))}{\rho ^2} = \frac{-16(d-2)}{(\rho^2+d-2 )^2}.
\end{align*}
Let $\mathbf u=\mathbf R_{\mathbf L_0}(\lambda)\mathbf f$ or, equivalently,
$(\lambda - \mathbf{L}_{0}) \mathbf{u} =\mathbf{f}$. The latter equation implies
\begin{align*}
(\lambda +1) u_{1} (\rho )  =u_{2} (\rho)- \rho u_{1}^{\prime} (\rho) + f_{1} (\rho).
\end{align*}
Now we use Lemma 4.1 from \cite{DonSch16} and $\|V^{(k)}\|_{L^\infty(0,1)}\lesssim 1$ for all $k\in \{0,1,\dots,m-1\}$ to obtain
\begin{align*}
 |\lambda +1| \| \mathbf{L}^{\prime} \mathbf{R}_{\mathbf{L} _{0}} (\lambda) \mathbf{f}  \| & = |\lambda +1|  \| \mathbf{L}^{\prime} \mathbf{u} \|  \simeq \big \| V \big( u_{2} - (\cdot) u_{1}^{\prime} +f_{1} \big) \big \|_{ H_{\text{rad}}^{m-1} (\mathbb{B}^{d+2} ) } \\
&  \lesssim \| u_{2} \|_{ H_{\text{rad}}^{m-1} (\mathbb{B}^{d+2} ) }  + \| (\cdot) u_{1}^{\prime} \|_{ H_{\text{rad}}^{m-1} (\mathbb{B}^{d+2} ) }  + \| f_{1} \|_{ H_{\text{rad}}^{m-1} (\mathbb{B}^{d+2} ) } \\
&  \lesssim \| u_{2} \|_{ H_{\text{rad}}^{m-1} (\mathbb{B}^{d+2} ) }  + \|  u_{1} \|_{ H_{\text{rad}}^{m} (\mathbb{B}^{d+2} ) }  + \| f_{1} \|_{ H_{\text{rad}}^{m-1} (\mathbb{B}^{d+2} ) } \\
& \simeq  \| \mathbf{u} \| + \| \mathbf{f} \|  \lesssim \Big( \frac{1}{\text{Re}\lambda +1}  + 1 \Big) \| \mathbf{f} \| \\
&\lesssim \|\mathbf f\|,
\end{align*}
where we have used the bound 
\[ \|\mathbf u\|=\|\mathbf R_{\mathbf L_0}(\lambda)\mathbf f\|\leq \frac{1}{\mathrm {Re}\lambda+1}\|\mathbf f\| \]
which follows from semigroup theory, see \cite{EngNag00}, p.~55, Theorem 1.10.
In other words,
\begin{align*}
\| \mathbf{L}^{\prime} \mathbf{R}_{\mathbf{L} _{0}} (\lambda)   \| \lesssim \frac{1}{ |\lambda +1| } \leq \frac{1}{|\lambda| -1} \leq \frac{1}{R-1}
\end{align*}
and by choosing $R$ sufficiently large, we can achieve the desired $\|\mathbf L'\mathbf R_{\mathbf L_0}(\lambda)\|<1$.
As a consequence, $[1-  \mathbf{L}^{\prime} \mathbf{R}_{\mathbf{L} _{0}} (\lambda) ]^{-1}$ 
exists and we obtain the bound 
\begin{align*}
\| \mathbf{R}_{\mathbf{L}} (\lambda) \| & = \| \mathbf{R}_{\mathbf{L}_{0}} (\lambda) [1-  \mathbf{L}^{\prime} \mathbf{R}_{\mathbf{L} _{0}} (\lambda) ]^{-1} \| \\
& \leq  \| \mathbf{R}_{\mathbf{L}_{0}} (\lambda) \| \| [1-  \mathbf{L}^{\prime} \mathbf{R}_{\mathbf{L} _{0}} (\lambda) ]^{-1} \| \\
& \leq  \| \mathbf{R}_{\mathbf{L}_{0}} (\lambda) \| \sum _{i=0}^{\infty} \| \mathbf{L}^{\prime} \mathbf{R}_{\mathbf{L} _{0}} (\lambda) \|^{i} \\
& \leq C_\epsilon.
\end{align*}
\end{proof}

\subsection{The eigenspace of the isolated eigenvalue} In this section, we convince ourselves that the eigenspace of the isolated eigenvalue $\lambda=1$ for the full linear operator $\mathbf{L}$ is spanned by 
\begin{align} \label{g} 
\mathbf{g} (\rho):=
\begin{pmatrix}
   g_{1} (\rho) \\
   g_{2} (\rho)
 \end{pmatrix}
 =
 \begin{pmatrix}
   \frac{1}{\rho ^2 +d-2} \\
    \frac{2(d-2)}{(\rho ^2 + d-2)^2}
 \end{pmatrix},
~~\rho \in [0,1].
 \end{align}
Consequently, we are looking for all $\mathbf{u}=(u_{1},u_{2}) \in \mathcal{D} (\mathbf{L}) \setminus \{ 0 \}$ such that $\mathbf{u} \in \ker(1-\mathbf{L})$. A straightforward calculation shows that the spectral equation $(1-\mathbf{L}) \mathbf{u} =0$ is equivalent to the following system of ordinary differential equations,
\begin{align*}
 \begin{cases}
      u_{2} (\rho)= \rho u_{1}^{\prime} (\rho)+ 2 u_{1} (\rho), & \\
      \big (1-\rho ^2 \big ) u_{1}^{\prime \prime} (\rho) + \Big( \frac{d+1}{\rho} - 6\rho \Big)u_{1}^{\prime} (\rho)- \Big( 6+ \frac{d-1}{2} \frac{\eta'(f_0(\rho))}{\rho ^2} \Big )  u_{1} (\rho) =0,  \ 
     \end{cases}
\end{align*} 
for $\rho \in (0,1)$. One can verify that a fundamental system of the second equation is given by
\begin{align*}
\Big  \{ \frac{1}{\rho ^2 + d-2},~~\frac{Q_{d-1}(\rho)}{\rho ^d (\rho ^2 + d-2)} \Big \}
\end{align*}
where $Q_{d-1}$ is a polynomial of degree $d-1$ with non-vanishing constant term. We can write the general solution for the second equation as
\begin{align*}
u_{1} (\rho) = C_{1}  \frac{1}{\rho ^2 + d-2} + C_{2} \frac{Q_{d-1}(\rho)}{\rho ^d (\rho ^2 + d-2)}. 
\end{align*}
We must ensure that $\mathbf{u} \in \mathcal{D}( \mathbf{L})$ which in particular implies that $u_{1}$ must lie in the Sobolev space $H_{\text{rad}}^{\frac{d+3}{2}} (\mathbb{B}^{d+2} )$. This requirement yields $C_{2}=0$ which in turn gives $\mathbf u \in \langle \mathbf{g} \rangle$. In conclusion, 
\begin{align} \label{ker}
\text{ker}(1-\mathbf{L}) = \langle \mathbf{g} \rangle,
\end{align}
as initially claimed. 
\subsection{Time evolution for the linearized problem}
We now focus on the time evolution for the linearized problem \eqref{Evolution}. Due to the presence of the eigenvalue $\lambda =1$, there exists a one dimensional subspace $\langle \mathbf{g} \rangle$ of initial data for which the solution grows exponentially in time. We call this subspace the unstable space. On the other hand, initial data from the stable subspace lead to solutions that decay exponentially in time. As we will show now, this time evolution estimates can be established using semigroup theory together with the previous results on the spectrum of the linear operators $\mathbf{L}_{0}$ and $\mathbf{L}$. To make this rigorous, we follow \cite{DonSch16} and use the fact that the unstable eigenvalue $\lambda=1$ is isolated to introduce a (non-orthogonal) projection $\mathbf{P}$. This projection decomposes the Hilbert space of initial data $\mathcal{H}$ into the stable and the unstable space. 
Most importantly, we must ensure that $\langle \mathbf{g}\rangle$ is the only unstable direction in $\mathcal{H}$. This is the key statement of the following proposition and it is equivalent to the fact that 
the algebraic multiplicity of the isolated eigenvalue $\lambda =1$, 
\begin{align*}
m_{a} (\lambda =1):=\mathrm{rank}\, \mathbf{P} =\dim\mathrm{rg}\, \mathbf{P},
\end{align*}
is equal to one. We denote by $\mathcal{B}(\mathcal{H})$ the set of bounded operators from $\mathcal{H}$ to itself and prove the following result.
\begin{prop} \label{projection}
There exists a projection 
\begin{align*}
\mathbf{P} \in \mathcal{B} (\mathcal{H}),\quad \mathbf{P}: \mathcal{H} \longrightarrow \langle \mathbf{g} \rangle,
\end{align*}
which commutes with the semigroup $\big( \mathbf{S}(\tau) \big) _{\tau \ge 0}$. In addition, we have
\begin{equation} 
\mathbf{S}(\tau) \mathbf{P} \mathbf{f} = e ^{\tau} \mathbf{P} \mathbf {f}, \label{semigroup1} 
\end{equation}
and there exist constants $C, \epsilon>0$ such that
\begin{equation}
 \|  (1-\mathbf{P}) \mathbf{S}(\tau) \mathbf{f} \| \leq C e^{-\epsilon \tau } \| (1- \mathbf{P}) \mathbf{f} \|,\label{semigroup2}
\end{equation}
for all $\mathbf {f} \in \mathcal{H}$ and $\tau \geq 0$.
\end{prop}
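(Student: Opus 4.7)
The plan is to build $\mathbf P$ as a Riesz spectral projection around $\lambda=1$ and reduce everything to algebraic simplicity of this eigenvalue. By Corollary \ref{cor:spec}, the point $\lambda=1$ is isolated in $\sigma(\mathbf L)$, so I choose a small positively oriented circle $\gamma$ centered at $1$ that encloses no other spectrum and set
\[
\mathbf P:=\frac{1}{2\pi i}\oint_\gamma \mathbf R_{\mathbf L}(\lambda)\,d\lambda.
\]
Holomorphic functional calculus for closed operators then yields $\mathbf P\in\mathcal B(\mathcal H)$, $\mathbf P^2=\mathbf P$, commutation of $\mathbf P$ with $\mathbf L$ on $\mathcal D(\mathbf L)$ and hence with $\mathbf S(\tau)$, and an invariant splitting $\mathcal H=\mathrm{rg}\,\mathbf P\oplus \ker\mathbf P$ such that $\sigma(\mathbf L|_{\mathrm{rg}\,\mathbf P})=\{1\}$ and $\sigma(\mathbf L|_{\ker\mathbf P})=\sigma(\mathbf L)\setminus\{1\}\subset\{\mathrm{Re}\lambda<0\}$.

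Next I would argue that $\mathrm{rg}\,\mathbf P$ is finite-dimensional. Since $1\in\rho(\mathbf L_0)$ by \eqref{spectrumLo}, near $\lambda=1$ the identity $\lambda-\mathbf L=[1-\mathbf L'\mathbf R_{\mathbf L_0}(\lambda)](\lambda-\mathbf L_0)$ expresses $\mathbf R_{\mathbf L}(\lambda)$ as $\mathbf R_{\mathbf L_0}(\lambda)[1-\mathbf K(\lambda)]^{-1}$, where $\mathbf K(\lambda):=\mathbf L'\mathbf R_{\mathbf L_0}(\lambda)$ is a holomorphic family of compact operators by Proposition \ref{growthestimatelinear}. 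Analytic Fredholm theory then implies that $\mathbf R_{\mathbf L}$ has a pole of finite order at $\lambda=1$, so that $\mathrm{rg}\,\mathbf P$ coincides with the algebraic eigenspace $\bigcup_{k\ge 1}\ker(1-\mathbf L)^k$, which is finite-dimensional.

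The heart of the proof is to show $\dim\mathrm{rg}\,\mathbf P=1$, which, in view of \eqref{ker}, is equivalent to $\ker(1-\mathbf L)^2=\ker(1-\mathbf L)$. I would argue by contradiction: assume $\mathbf u\in\mathcal D(\mathbf L)$ satisfies $(1-\mathbf L)\mathbf u=\mathbf g$. Eliminating $u_2$ via the first component reduces this to an inhomogeneous linear second-order ODE for $u_1$ whose homogeneous part is exactly the $\lambda=1$ spectral equation, with known fundamental system $\{1/(\rho^2+d-2),\ Q_{d-1}(\rho)/[\rho^d(\rho^2+d-2)]\}$. Writing $u_1$ by variation of parameters and analyzing the resulting representation near the two regular singular endpoints $\rho=0$ and $\rho=1$ (where, for odd $d$, the Frobenius indices at $\rho=1$ differ by an integer, the resonant case from Proposition \ref{OhaioProp}), I expect the contribution of the source $\mathbf g$ to generate a non-removable logarithmic or power singularity that precludes $u_1\in H^{(d+3)/2}_{\mathrm{rad}}(\mathbb B^{d+2})$, no matter what multiple of the regular homogeneous solution $g_1$ is added. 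This is the principal obstacle and is precisely the new spectral issue advertised in the introduction.

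Once algebraic simplicity is established, $\mathbf L\mathbf P\mathbf f=\mathbf P\mathbf L\mathbf P\mathbf f=\mathbf P\mathbf f$ on $\mathcal H$, so $\mathbf S(\tau)\mathbf P\mathbf f=e^\tau\mathbf P\mathbf f$, proving \eqref{semigroup1}. For \eqref{semigroup2} I restrict the semigroup to the invariant Hilbert subspace $\ker\mathbf P$ and invoke the Gearhart–Prüss theorem: Proposition \ref{O2} supplies the uniform resolvent bound on $\Omega_{\epsilon,R_\epsilon}$, while on the compact remainder $\{\mathrm{Re}\lambda\ge -\epsilon\}\setminus \Omega_{\epsilon,R_\epsilon}$—which for small enough $\epsilon>0$ contains no point of $\sigma(\mathbf L|_{\ker\mathbf P})$—continuity of $\lambda\mapsto \mathbf R_{\mathbf L|_{\ker\mathbf P}}(\lambda)$ gives a bound as well. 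Gluing these two bounds yields a uniform resolvent bound on the half-plane $\{\mathrm{Re}\lambda\ge -\epsilon\}$ in $\mathcal B(\ker\mathbf P)$, and Gearhart–Prüss in the Hilbert-space setting then delivers the claimed exponential decay.
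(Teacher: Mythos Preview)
Your outline is correct and matches the paper's proof almost step for step: Riesz projection, finite rank of $\mathbf P$, algebraic simplicity of $\lambda=1$ via the inhomogeneous ODE $(1-\mathbf L)\mathbf u=\mathbf g$ analyzed by variation of parameters, and then Gearhart--Pr\"uss on $\ker\mathbf P$. The only cosmetic difference is that for finite rank the paper invokes stability of the essential spectrum under the compact perturbation $\mathbf L'$ (Kato, Theorem IV.5.35) rather than analytic Fredholm theory; both arguments are equivalent here.

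The one place where your proposal is genuinely incomplete is exactly where you flag it: showing that the variation-of-parameters solution cannot lie in $H^{(d+3)/2}_{\mathrm{rad}}(\mathbb B^{d+2})$. For $d=3$ this is easy (the relevant integral has a strictly positive integrand), but for $d\geq 5$ the obstruction is that the function $\mathcal I_d$ defined in \eqref{def:Id} fails to be analytic at $\rho=1$, and this is \emph{not} verifiable by a simple positivity or nonvanishing check. The paper defers this to Proposition~\ref{prop:Id} in the appendix, where an integration by parts reduces the question to Lemma~\ref{lem:U}, which in turn is proved by recognizing the integral as a second solution of the supersymmetric spectral problem from \cite{CosDonGlo16} and invoking Theorem~4.1 there. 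So your ``expectation'' is correct, but turning it into a proof for general odd $d$ requires this external input.
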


\begin{proof}
We argue along the lines of \cite{DonSch16}. Since the eigenvalue $\lambda =1$ is isolated, we can define the spectral projection
\begin{align*}
\mathbf{P}: \mathcal{H} \longrightarrow  \mathcal{H}, \quad \mathbf{P} := \frac{1}{2 \pi i} \int _{\gamma} \mathbf{R}_{\mathbf{L}} (\mu) d \mu, 
\end{align*}
where $\gamma : [0,2\pi] \longrightarrow \mathbb{C}$ is a positively orientated circle around $\lambda =1$ with radius so small that $\gamma \big( [0,2 \pi] \big) \subseteq \rho (\mathbf{L})$, see e.g.~\cite{Kat95}. The projection $\mathbf P$ commutes with the operator $\mathbf{L}$ and thus with the semigroup $\mathbf S(\tau)$. Moreover, $\mathbf{P}$ decomposes the Hilbert space as $\mathcal{H} =\mathcal M \oplus \mathcal N$, where $\mathcal M:=\rg \mathbf P$ and $\mathcal N:=\rg(1-\mathbf P)=\ker \mathbf{P}$. Most importantly, the operator $\mathbf{L}$ is decomposed accordingly into the parts $\mathbf{L}_{\mathcal M}$ and $\mathbf{L}_{\mathcal N}$ on $\mathcal M$ and $\mathcal N$, respectively. The spectra of these operators are given by
\begin{align} \label{spectrum}
\sigma \left( \mathbf L_{\mathcal N} \right ) = \sigma (\mathbf{L}) \setminus \{1\},\qquad \sigma \left( \mathbf{L}_{\mathcal M} \right ) = \{1\}.
\end{align}
We refer the reader to \cite{Kat95} for these standard results. 

To proceed, we break down the proof into the following steps: \\ \\
Step 1: We prove that $\rank\mathbf{P}:=\dim\rg\mathbf{P}<+\infty$. We argue by contradiction and assume that $\rank\mathbf{P}=+\infty$. Using \cite{Kat95}, p.~239, Theorem 5.28, the fact that $\mathbf{L}^{\prime}$ is compact (see Proposition $\ref{growthestimatelinear}$), and the fact that the essential spectrum is stable under compact perturbations (\cite{Kat95}, p.~244, Theorem 5.35), we obtain
\begin{align*}
\mathrm{rank}\,\mathbf{P} = +\infty  \Longrightarrow 1 \in \sigma _{e} (\mathbf{L}) = \sigma _{e} (\mathbf{L} -\mathbf{L}^{\prime})=\sigma _{e}(\mathbf{L}_{0}) \subseteq \sigma (\mathbf{L}_{0}).
\end{align*}
This contradicts \eqref{spectrumLo}. 
\\ \\
Step 2: We prove that $\langle \mathbf{g} \rangle=\mathrm{rg}\,\mathbf{P}$. It suffices to show $\mathrm{rg}\,\mathbf{P} \subseteq \langle \mathbf{g} \rangle$ since the reverse inclusion follows from the abstract theory. From Step 1, the operator $1-\mathbf{L}_{\mathcal M}$ acts on the finite-dimensional Hilbert space $\mathcal M=\rg \mathbf P$ and, from $\eqref{spectrum}$, $\lambda =0$ is its only spectral point. Hence, $1-\mathbf{L}_{\mathcal M}$ is nilpotent, i.e., there exists a $k\in \mathbb N$ such that
\begin{align*}
\big(  1-\mathbf{L}_{\mathcal M}  \big)^{k}  \mathbf{u}= 0
\end{align*}
for all $\mathbf{u} \in \mathrm{rg}\, \mathbf{P}$
and we assume $k$ to be minimal.
Recall $\eqref{ker}$ to see that the claim follows immediately if $k=1$. We proceed by contradiction and assume that $k\geq 2$. Then, there exists a nontrivial function $\mathbf{u} \in \rg \mathbf{P} \subseteq \mathcal{D}( \mathbf{L})$ such that 
$(1-\mathbf L_{\mathcal M})\mathbf u$ is nonzero and belongs to $\ker(1-\mathbf L_{\mathcal M})\subseteq \ker(1-\mathbf L)=\langle\mathbf g\rangle$.
This means that $\mathbf{ u } \in \rg\mathbf{P} \subseteq \mathcal{D} (\mathbf{L})$ satisfies $(1- \mathbf{ L }) \mathbf{ u } = \alpha \mathbf{ g }$, for some $\alpha \in \mathbb{ C }\setminus \{0\}$. Without loss of generality we set $\alpha=-1$ and a straightforward computation shows that the first component of $\mathbf u$ solves the second order differential equation
\begin{align*} 
\left(1-\rho ^2\right) u_{1}^{\prime \prime} (\rho) + \left( \frac{d+1}{\rho} -6 \rho \right ) u_{1} ^{\prime} (\rho) - \left ( 6 + \frac{d-1}{2} \frac{\eta'(f_0(\rho) ) }{\rho ^2} \right ) u_{1} (\rho) =  G(\rho),
\end{align*}
for $\rho \in (0,1)$, where 
\begin{align*}
G(\rho):= \frac{\rho ^2 +5(d-2)}{ (\rho ^2 + d-2)^2},~~\rho \in [0,1].
\end{align*}
In order to find the general solution to this equation, recall $\eqref{g}$ to see that 
\begin{align*}
\hat{u}_{1} (\rho):= g_{1} (\rho) = \frac{1}{\rho ^2 +d-2},~~~\rho \in (0,1)
\end{align*}
is a particular solution to the homogeneous equation
\begin{align*} 
\left(1-\rho ^2 \right) u_{1}^{\prime \prime} (\rho) + \left( \frac{d+1}{\rho} -6 \rho \right ) u_{1} ^{\prime} (\rho) - \left ( 6 + \frac{d-1}{2} \frac{\eta'(f_0(\rho) ) }{\rho ^2} \right ) u_{1} (\rho) = 0.
\end{align*}
To find another linearly independent solution, we use the Wronskian
\begin{align*}
\mathcal{W} (\rho) := (1-\rho ^2)^{\frac{d-5}{2} } \rho ^{-d-1}
\end{align*}
to obtain
\begin{align*}
\hat{u}_{2} (\rho) := \hat{u}_{1} (\rho) \int _{\rho _1}^{\rho}  (1- x ^2)^{\frac{d-5}{2}} x^{-d-1} (x^2 + d-2)^2 dx,
\end{align*}
for some constant $\rho_{1} \in (0,1)$ and for all $\rho \in (0,1)$. 
Note that we have the expansion
\[ \hat u_2(\rho)=\rho^{-d}\sum_{j=0}^\infty a_j\rho^j,\quad a_0\not= 0 \]
near $\rho=0$. Furthermore, if $d\geq 5$, $\hat u_2\in C^\infty(0,1]$ and we choose $\rho_1=1$ which yields the expansion
\[ \hat u_2(\rho)=(1-\rho)^{\frac{d-3}{2}}\sum_{j=0}^\infty b_j (1-\rho)^j,\qquad b_0\not= 0 \]
near $\rho=1$.
For $d=3$, we set $\rho_1=\frac12$ and the expansion of $\hat u_2$ near $\rho=1$ contains a term $\log(1-\rho)$.
We invoke the variation of constants formula to see that $u_1$ can be expressed as
\begin{align*}
u_{1} (\rho) & = c_{1}   \hat{u}_{1} (\rho) + c_{2} \hat{u}_{2} (\rho) \\
& + \hat{u}_{2} (\rho) \int _{0}^{\rho} \frac{ \hat{u}_{1}(y)G(y)y^{d+1} }{(1-y^2)^{ \frac{d-3}{2}  } } dy  - \hat{u}_{1} (\rho) \int _{0}^{\rho} \frac{ \hat{u}_{2}(y)G(y)y^{d+1} }{(1-y^2)^{ \frac{d-3}{2}  } } dy,
\end{align*}
for some constants $c_{1}, c_{2} \in \mathbb{C}$ and for all $\rho \in (0,1)$. 
The fact that $u_1\in H^{\frac{d+3}{2}}_\mathrm{rad}(\mathbb B^{d+2})$ implies
$c_2=0$ and we are left with
\begin{equation}
\label{eq:expru1}
u_{1} (\rho)  = c_{1}   \hat{u}_{1} (\rho) 
 + \hat{u}_{2} (\rho) \int _{0}^{\rho} \frac{ \hat{u}_{1}(y)G(y)y^{d+1} }{(1-y^2)^{ \frac{d-3}{2}  } } dy  - \hat{u}_{1} (\rho) \int _{0}^{\rho} \frac{ \hat{u}_{2}(y)G(y)y^{d+1} }{(1-y^2)^{ \frac{d-3}{2}  } } dy.
\end{equation}
If $d=3$, $\hat u_2(\rho)\simeq \log(1-\rho)$ near $\rho=1$ and thus, the last term in Eq.~\eqref{eq:expru1} stays bounded as $\rho\to 1-$ whereas the second term diverges unless 
\[ \int_0^1  \frac{ \hat{u}_{1}(y)G(y)y^{d+1} }{(1-y^2)^{ \frac{d-3}{2}  } } dy=0, \]
which, however, is impossible since the integrand is strictly positive on $(0,1)$. 
This contradicts $u_1\in H^{\frac{d+3}{2}}_\mathrm{rad}(\mathbb B^{d+2})$ and we arrive at the desired $k=1$.

Next, we focus on $d\geq 5$, where the last term in Eq.~\eqref{eq:expru1} is smooth on $[0,1]$. To analyze the second term, we set
\begin{align}
\label{def:Id}
\mathcal{I} _{d}(\rho) :=  \hat{u}_{2} (\rho) \int _{0}^{\rho} \frac{ F_{d}(y) }{(1-y)^{ \frac{d-3}{2}  } } dy, \quad \text{~~~~~}   ~ F_{d}(y):= \frac{ \hat{u}_{1}(y)G(y)y^{d+1} }{(1+y)^{\frac{d-3}{2}}}=\frac{y^{d+1}(y^2+5(d-2))}{(1+y)^{\frac{d-3}{2}}(y^2+d-2)^3}.
\end{align}
Note that $F_{5}(1)\not= 0$ and thus, the expansion of $\mathcal I_5(\rho)$ near $\rho=1$ contains a term of the form $(1-\rho)\log(1-\rho)$. 
Consequently, $\mathcal I_5''\notin L^2(\frac12,1)$ and this is a contradiction to $u_1\in H^4_{\mathrm{rad}}(\mathbb B^7)$. 
The general case is postponed to the appendix (Proposition \ref{prop:Id}) where it is shown that the function $\mathcal I_d$ is not analytic at $\rho=1$. This implies that the expansion of $\mathcal{I}_d(\rho)$ near $\rho =1$ contains a term $(1-\rho)^{\frac{d-3}{2}}\log(1-\rho)$ which again contradicts $u_1\in H^{\frac{d+3}{2}}_\mathrm{rad}(\mathbb B^{d+2})$.\\ \\
Step 3: Finally, we prove the estimates $\eqref{semigroup1}$ and $\eqref{semigroup2}$ for the semigroup. First, note that $\eqref{semigroup1}$ follows immediately from the facts that $\lambda =1$ is an eigenvalue of $\mathbf{L}$ with eigenfunction $\mathbf{g}$ and $\rg\mathbf{P}=\langle \mathbf{g} \rangle$. 
Furthermore, from Corollary \ref{cor:spec} and Proposition \ref{O2} we infer the existence of $C,\epsilon>0$ such that 
\[ \|\mathbf R_{\mathbf L}(\lambda)(1-\mathbf P)\|\leq C \]
for all $\lambda \in \mathbb C$ with $\mathrm{Re}\lambda\geq -2\epsilon$.
Consequently, the Gearhart-Pr\"uss Theorem, see \cite{EngNag00}, p.~302, Theorem 1.11, yields the bound \eqref{semigroup2}.
\end{proof}

\subsection{Estimates for the nonlinearity}
The aim of this section is to establish a Lipschitz-type estimate for the nonlinearity. Recall that the nonlinear term in $\eqref{Evolution}$ is given by
\begin{align*}
\mathbf{  N } ( \mathbf{u}  ) (\rho) = 
  \begin{pmatrix}
   0  \\
   \hat N (\rho, u_{1} (\rho))
 \end{pmatrix}
 :=
 \begin{pmatrix}
   0 \\
   - \frac{d-1}{2} \frac{N( \rho u _{1} (\rho) )}{\rho ^3}
 \end{pmatrix}.
\end{align*} 
To begin with, we claim that 
\begin{align*} 
&\hat N(\rho,u_1(\rho)) \\
&=4 (d-1) u_{1}^{2} (\rho) \int _{0}^{1}  \int _{0}^{1}\int _{0}^{1} \cos \left( 2z \left( f_0(\rho) +xy \rho u_{1} (\rho) \right) \right) \left( \frac{ f_0 (\rho)}{\rho} + xy u_{1} (\rho)\right) x dz dy dx. \\
\end{align*}
To see this, we use the fundamental theorem of calculus and the fact that $\eta ^{\prime \prime} (0)=0$ to write
\begin{align*} 
N( \rho u _{1} (\rho) ) & = \eta( f_0 (\rho) + \rho u _{1} (\rho) ) - \eta ( f_0 (\rho)) - \eta ^{\prime} ( f_0 (\rho)) \rho u _{1} (\rho) \\
&= \int _{f_0 (\rho)}^{f_0(\rho) + \rho u_{1} (\rho)} \eta ^{\prime} (s) ds - \eta ^{\prime} ( f_0 (\rho)) \rho u _{1} (\rho) \\
&=\rho u_{1} (\rho) \int _{0}^{1} \eta ^{\prime} (f_0 (\rho) + x \rho u_{1} (\rho)) dx - \eta ^{\prime} ( f_0 (\rho)) \rho u _{1} (\rho) \\
&=\rho u_{1} (\rho)  \int _{0}^{1}\left( \eta ^{\prime} (f_0 (\rho) + x \rho u_{1} (\rho)) - \eta ^{\prime} ( f_0(\rho)) \right) dx \\
&=\rho u_{1} (\rho)  \int _{0}^{1}\left( \int _{f_0(\rho)}^{f_0 (\rho) + x\rho u_{1} (\rho)} \eta ^{\prime \prime} (s) ds \right) dx \\
&=\rho^{2} u_{1}^{2} (\rho)  \int _{0}^{1}x  \int _{0}^{1} \eta ^{\prime \prime} (f_0 (\rho) + xy \rho u_{1} (\rho)) dy dx \\
&=\rho^{2} u_{1}^{2} (\rho)  \int _{0}^{1}x  \int _{0}^{1} \int _{0}^{f_0(\rho) + xy \rho u_{1}(\rho)} \eta ^{\prime \prime \prime } (s) ds dy dx \\
&=\rho^{2} u_{1}^{2} (\rho)  \int _{0}^{1}x  \int _{0}^{1} \int _{0}^{1} \eta ^{\prime \prime \prime } \left(  (f_0(\rho) +xy \rho u_{1} (\rho))z \right) \left( f_0 (\rho) + xy \rho u_{1} (\rho)\right)  dz dy dx \\
&=\rho^{3} u_{1}^{2} (\rho)  \int _{0}^{1} x \int _{0}^{1} \int _{0}^{1} \eta ^{\prime \prime \prime } \left(  (f_0(\rho) +xy \rho u_{1} (\rho))z \right) \left(\frac{ f_0 (\rho)}{\rho} + xy u_{1} (\rho)\right)  dz dy dx. \\
\end{align*}
For later purposes, we note that the function
\begin{align*} 
\hat N (\rho, \zeta)=4 (d-1)  \zeta ^{2}   \int _{0}^{1} \int _{0}^{1} \int _{0}^{1} \cos \left( 2z \left( f_0(\rho) +xy \rho  \zeta  \right) \right) \left( \frac{ f_0 (\rho)}{\rho} + xy  \zeta \right) x dz dy dx, 
\end{align*}
defined for all $(\rho, \zeta) \in [0,1] \times \mathbb{R},$ is perfectly smooth in both variables since
\begin{align*}
\frac{f_0 (\rho)}{\rho} =  \frac{2}{\rho} \arctan \left ( \frac{\rho}{\sqrt{d-2}}  \right) 
\end{align*}
is smooth at $\rho=0$.
Moreover, we define
\begin{align} \label{DefM} 
M (\rho, \zeta) := \partial _{\zeta} \hat N (\rho, \zeta) = 4 (d-1) \left (  A(\rho, \zeta) + B(\rho, \zeta) +C(\rho, \zeta) +D(\rho, \zeta)  \right),
\end{align}
where
\begin{align*}
& A(\rho,\zeta):= 2 \frac{f_0(\rho)}{\rho} \zeta  \int _{0}^{1} \int_{0}^{1}  \int _{0}^{1}  \cos  \left( 2z \left( f_0 (\rho) +xy \rho  \zeta \right) \right) x  dz dy dx, \\
& B(\rho, \zeta):= -2 f_0(\rho) \zeta ^{2}  \int_{0}^{1} \int_{0}^{1} \int_{0}^{1}  \sin \left( 2z \left(f_0 (\rho) + xy \rho \zeta \right) \right)  x^{2} y z dz dy dx, \\
& C(\rho, \zeta):=  3  \zeta ^{2}  \int_{0}^{1} \int_{0}^{1} \int_{0}^{1}  \cos \left( 2z \left( f_0(\rho) + xy \rho \zeta \right) \right)  x^{2} y  dz dy dx, \\
& D(\rho, \zeta):=  -2 \rho \zeta ^{3}  \int_{0}^{1} \int_{0}^{1} \int_{0}^{1}  \sin \left( 2z \left( f_0 (\rho) + xy \rho \zeta \right) \right)  x^{3} y^{2}  zdz dy dx.
\end{align*}
We denote by $\mathcal{B}_{\delta} \subseteq \mathcal{H}$ the ball of radius $\delta$ in $\mathcal{H}$ centered at zero, i.e.,
\begin{align*}
\mathcal{B}_{\delta}:= \left \{\mathbf{u} \in \mathcal{H}:~\left \| \mathbf{u} \right \|= \left \| (u_{1},u_{2}) \right \|_{ H_{\text{rad}}^{\frac{d+3}{2}} (\mathbb{B}^{d+2}) \times H_{\text{rad}}^{\frac{d+1}{2}} (\mathbb{B}^{d+2}) }
\leq \delta  \right \}. 
\end{align*}
The main result of this section is the following Lipschitz-type estimate.

\begin{lemma}
Let $\delta>0$.
Then we have
\begin{align} \label{Lipschitz}
\big \|  \mathbf{N (u)} -  \mathbf{N(v)}\big \|  \lesssim  (\|  \mathbf{u}  \|  +\|  \mathbf{v} \| ) \|  \mathbf{u}-\mathbf{v} \| 
\end{align}
for all $\mathbf{u}, \mathbf{v} \in \mathcal{B}_{\delta}$.
\end{lemma}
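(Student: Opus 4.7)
Only the second component of $\mathbf N$ is nontrivial, so the task is to bound the $H_{\text{rad}}^{m-1}(\mathbb B^{d+2})$ norm of $\hat N(\cdot, u_1) - \hat N(\cdot, v_1)$, where $m = (d+3)/2$. The integral representation derived above already isolates an explicit $\zeta^2$, so I can factor $\hat N(\rho,\zeta) = \zeta^2\, \Phi(\rho,\zeta)$ with
\[
\Phi(\rho,\zeta) := 4(d-1)\int_{[0,1]^3}\! \cos\!\big(2z(f_0(\rho)+xy\rho\zeta)\big)\Big(\tfrac{f_0(\rho)}{\rho} + xy\zeta\Big)x\,dz\,dy\,dx.
\]
Since $f_0$ is odd in $\rho$, $\Phi$ depends smoothly on $(\rho^2,\zeta)$ and therefore extends to a smooth function on $\overline{\mathbb B^{d+2}}\times \mathbb R$ under $\rho = |x|$; in particular its composition with a radial Sobolev function is well-defined across the origin.

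The algebraic core of the argument is the telescoping identity
\[
u_1^2\Phi(\cdot,u_1) - v_1^2\Phi(\cdot,v_1) = (u_1 - v_1)(u_1 + v_1)\Phi(\cdot,u_1) + v_1^2(u_1 - v_1)\!\int_0^1\! \partial_\zeta\Phi(\cdot, v_1 + t(u_1 - v_1))\,dt,
\]
which exhibits the desired quadratic vanishing as $\mathbf u, \mathbf v \to 0$. Since $m > (d+2)/2$, the space $H_{\text{rad}}^m(\mathbb B^{d+2})$ is a Banach algebra embedded in $L^\infty$, so $\|u_1\|_{L^\infty} + \|v_1\|_{L^\infty} \lesssim \delta$ on $\mathcal B_\delta$. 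Trading one derivative via $H^m \hookrightarrow H^{m-1}$ and applying the algebra estimate $\|fgh\|_{H^m}\lesssim \|f\|_{H^m}\|g\|_{H^m}\|h\|_{H^m}$ to each summand reduces the problem to uniform $H^m$ bounds on the composition factors $\Phi(\cdot, u_1)$ and $\partial_\zeta \Phi(\cdot, v_1 + t(u_1 - v_1))$.

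These bounds follow from a standard Moser-type composition estimate: for any smooth $F \colon \overline{\mathbb B^{d+2}}\times \mathbb R \to \mathbb R$, the map $u \mapsto F(\cdot, u(\cdot))$ sends bounded subsets of $H_{\text{rad}}^m(\mathbb B^{d+2})$ to bounded subsets, with a constant depending only on the $H^m$ bound on $u$. Invoking this on $\mathcal B_\delta$ yields a constant $C_\delta$, so the first summand is controlled by $\|u_1 - v_1\|_{H^m}\|u_1 + v_1\|_{H^m}\, C_\delta \lesssim (\|\mathbf u\|+\|\mathbf v\|)\|\mathbf u - \mathbf v\|$, while the second is bounded by $C_\delta\|v_1\|_{H^m}^2\|u_1 - v_1\|_{H^m} \lesssim \delta(\|\mathbf u\|+\|\mathbf v\|)\|\mathbf u - \mathbf v\|$, fitting into \eqref{Lipschitz} after absorbing $\delta$ into the implicit constant.

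The main technical obstacle is the rigorous deployment of the Moser composition estimate in the radial setting: I need that $\Phi(|x|, u_1(|x|))$ genuinely lies in $H^m(\mathbb B^{d+2})$, not just in $H^m((0,1))$. The crucial structural input is the even parity of $\Phi$ in $\rho$, inherited from the oddness of $f_0$, which ensures smoothness of the composed function across $x = 0$; once this is recorded, the classical nonlinear composition theorems apply essentially verbatim and the remainder of the proof is bookkeeping on the algebra and embedding inequalities.
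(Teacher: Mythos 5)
Your proof is correct and follows essentially the same route as the paper: both arguments rest on the algebra property of $H^{\frac{d+3}{2}}(\mathbb B^{d+2})$, a Moser-type composition estimate combined with an extension/parity argument to handle radiality at the origin, and the Sobolev embedding into $L^\infty$ on $\mathcal B_\delta$. The only (harmless) difference is that you extract the quadratic vanishing algebraically by factoring $\hat N(\rho,\zeta)=\zeta^2\Phi(\rho,\zeta)$ and telescoping, whereas the paper writes $\hat N(\cdot,u_1)-\hat N(\cdot,v_1)=(u_1-v_1)\int_0^1 M(\cdot,\zeta(\sigma))\,d\sigma$ with $M=\partial_\zeta\hat N$ and uses that $M(\rho,0)=0$ to get $\|M(\cdot,f)\|\lesssim\|f\|$ from Moser's inequality.
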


\begin{proof}
We start by fixing a $\delta>0$, we pick two elements $\mathbf{u}, \mathbf{v} \in \mathcal{B}_{\delta}$ and define the auxiliary function
\begin{align*}
\zeta (\sigma)(\rho)= \sigma u_{1}(\rho) + (1-\sigma) v_{1} (\rho),
\end{align*}
for $\rho \in (0,1)$ and $\sigma \in [0,1]$. The triangle inequality implies
\begin{align*}
\mathbf{u},\mathbf{v} \in \mathcal{B}_{\delta} \Longrightarrow \left \| u_{1} \right \|_{H_{\text{rad}}^{\frac{d+3}{2}} (\mathbb{B}^{d+2})}  \leq \delta ,~\left \| v_{1} \right \|_{H_{\text{rad}}^{\frac{d+3}{2}} (\mathbb{B}^{d+2})} \leq \delta  
\Longrightarrow  \left  \|  \zeta (\sigma) \right  \| _{ H_{\text{rad}}^{\frac{d+3}{2}} (\mathbb{B}^{d+2})  }  \leq  \delta,
\end{align*}
for all $\sigma \in [0,1]$. In other words, 
\begin{align*}
\zeta (\sigma) \in \mathscr{B}_{\delta}:= \left \{f \in H_{\text{rad}}^{\frac{d+3}{2}} (\mathbb{B}^{d+2})  :~\left \| f \right \|_{H_{\text{rad}}^{\frac{d+3}{2}} (\mathbb{B}^{d+2})  } \leq \delta  \right \},
\end{align*}
for all $\sigma \in [0,1]$. Now, we claim that to show $\eqref{Lipschitz}$, it suffices to establish the estimate
\begin{align} \label{M}
\left \| M(\cdot, f(\cdot) ) \right \|_{ H_{\text{rad}}^{\frac{d+3}{2}} (\mathbb{B}^{d+2})   } \lesssim \left \| f \right \|_{ H_{\text{rad}}^{\frac{d+3}{2}} (\mathbb{B}^{d+2})  }
\end{align}
for all $f \in \mathscr{B}_{\delta}$,
where $M$ is given by $\eqref{DefM}$. To see this, we use the algebra property
\begin{align*}
\| fg \|_{ H ^{\frac{d+3}{2}} (\mathbb{B}^{d+2}) } \lesssim \| f \|_{ H ^{\frac{d+3}{2}} (\mathbb{B}^{d+2}) } \| g \|_{ H ^{\frac{d+3}{2}} (\mathbb{B}^{d+2}) } ,
\end{align*}
which holds since $\frac{d+3}{2}>\frac{d+2}{2}$,
to estimate
\begin{align*} 
\big \| \mathbf{N(u)} -\mathbf{N(v)} \big\|  &= \big \|  \hat N (\cdot, u_{1}(\cdot) ) - \hat N (\cdot , v_{1}(\cdot)) \big \| _{H_{\text{rad}}^{\frac{d+1}{2}} (\mathbb{B}^{d+2}) } \\
&\leq
\big \|  \hat N (\cdot, u_{1}(\cdot) ) - \hat N (\cdot , v_{1}(\cdot)) \big \| _{H_{\text{rad}}^{\frac{d+3}{2}} (\mathbb{B}^{d+2}) }  \\
&=\left \|  \int _{v_{1}(\cdot)}^{u_{1}(\cdot)}  \partial _{2} \hat N (\cdot, \zeta )  d \zeta  \right \|  _{H_{\text{rad}}^{\frac{d+3}{2}} (\mathbb{B}^{d+2})}   \\
&=\left \|   \left( u_{1}(\cdot)  -  v_{1}(\cdot) \right) \int _{0}^{1} \partial _{2} \hat N (\cdot , \underbrace{  \sigma u_{1}(\cdot)+(1- \sigma) v_{1}(\cdot)}_{\zeta(\sigma) })  d \sigma \right \| _{H_{\text{rad}}^{\frac{d+3}{2}} (\mathbb{B}^{d+2}) }   \\
 \nonumber 
&\lesssim \left \|   u_{1} - v_{1} \right \| _{ H ^{\frac{d+3}{2}} (\mathbb{B}^{d+2}) }  \left \|  \int _{0}^{1} \partial _{2} \hat N (\cdot , \zeta (\sigma) ) d \sigma \right \| _{H_{\text{rad}}^{\frac{d+3}{2}} (\mathbb{B}^{d+2}) }  \\
& \lesssim \left \|   u_{1} - v_{1} \right \| _{ H ^{\frac{d+3}{2}} (\mathbb{B}^{d+2}) }   \int _{0}^{1}  \left \|  M (\cdot , \zeta (\sigma)(\cdot) ) \right \| _{H_{\text{rad}}^{\frac{d+3}{2}} (\mathbb{B}^{d+2}) }  d \sigma   \\
&  \lesssim  \left \| u_{1} - v_{1}  \right \| _{ H_{\text{rad}}^{\frac{d+3}{2}} (\mathbb{B}^{d+2}) }  \int _{0}^{1}  \left \|  \zeta (\sigma)  \right \| _{H_{\text{rad}}^{\frac{d+3}{2}} (\mathbb{B}^{d+2}) }   d \sigma  \\
& \lesssim \left \| u_{1} - v_{1}  \right \| _{ H_{\text{rad}}^{\frac{d+3}{2}} (\mathbb{B}^{d+2}) }  \int _{0}^{1} \left( \sigma  \left \|  u_{1} \right \| _{H_{\text{rad}}^{\frac{d+3}{2}} (\mathbb{B}^{d+2}) }  +(1-\sigma) \left \|  v_{1}    \right \| _{H_{\text{rad}}^{\frac{d+3}{2}} (\mathbb{B}^{d+2}) }  \right)  d \sigma   \\
& \lesssim \left \| u_{1} - v_{1}  \right \| _{ H_{\text{rad}}^{\frac{d+3}{2}} (\mathbb{B}^{d+2}) }  \left( \left \|   u_{1} \right \| _{H_{\text{rad}}^{\frac{d+3}{2}} (\mathbb{B}^{d+2}) }  + \left \|  v_{1}    \right \| _{H_{\text{rad}}^{\frac{d+3}{2}} (\mathbb{B}^{d+2}) }  \right) \\
&\lesssim   \left \| \mathbf{u} - \mathbf{v}  \right \| \left( \left \|   \mathbf{u} \right \|  + \left \|  \mathbf{v}   \right \| \right).
\end{align*}
It remains to prove $\eqref{M}$. To this end we use a simple extension argument (see e.g.~Lemmas B.1 and B.2 in \cite{DonSch16}) and Moser's inequality (\cite{Rau12}, p.~224, Theorem 6.4.1) to infer the existence of a smooth function $h: [0,\infty)\to [0,\infty)$ such that
\[ \|M(\cdot,f(\cdot))\|_{H^{\frac{d+3}{2}}_\mathrm{rad}(\mathbb B^{d+2})}\leq h\left (\|f\|_{L^\infty(\mathbb B^{d+2})}\right )\|f\|_{H^{\frac{d+3}{2}}_\mathrm{rad}(\mathbb B^{d+2})} \]
for all $f\in \mathscr B_\delta$.
By Sobolev embedding we have $\|f\|_{L^\infty(\mathbb B^{d+2})}\lesssim \|f\|_{H^{\frac{d+3}{2}}_\mathrm{rad}(\mathbb B^{d+2})}\leq \delta$
for all $f\in \mathscr B_\delta$ and \eqref{M} follows.
This concludes the proof.
\end{proof}

\subsection{The abstract nonlinear Cauchy problem} In this section, we focus on the existence and uniqueness of solutions to the Cauchy problem $\eqref{Evolution}$. In fact, by appealing to Definition $\ref{def}$, we consider the integral equation
\begin{align} \label{integralequation}
\Phi(\tau)= \mathbf{S}(\tau) \mathbf{u} + \int _{0}^{\tau} \mathbf{S}(\tau - s) \mathbf{N} \big( \Phi (s) \big)  ds, 
\end{align}
for all $\tau \ge 0$ and $\mathbf u\in \mathcal H$. We introduce the Banach space
\begin{align*}
\mathcal{X}:= \{ \Phi \in C( [0,\infty);\mathcal{H}) : ~~\| \Phi \|_{\mathcal{X}} := \sup _{\tau >0} e^{\epsilon \tau} \| \Phi(\tau) \| < + \infty  \}
\end{align*}
with $\epsilon>0$ from Proposition \ref{projection}. Moreover, we denote by $\mathcal{X}_{\delta}$ the closed ball
\begin{align*}
\mathcal{X}_{\delta} :=\left \{ \Phi \in \mathcal{X}: \| \Phi \|_{\mathcal{X}} \leq \delta \right \} = \left \{ \Phi \in C( [0,\infty);\mathcal{H}): \| \Phi \| \leq \delta e^{-\epsilon \tau},~~\forall \tau >0 \right \}.
\end{align*}

In the following, we will only sketch the rest of the proof and discuss the main arguments since they are analogous to \cite{Don11, DonSch12, DonSch14, Don14, DonSch16}. To prove the main theorem, we would like to apply a fixed point argument to the integral equation $\eqref{integralequation}$. However, the exponential growth of the solution operator on the unstable subspace prevents from doing this directly. We overcome this obstruction by subtracting the correction term\footnote{All integrals here exist as Riemann integrals over continuous functions.} 
\begin{align} \label{Correction}
\mathbf{C}(\Phi,\mathbf{u}) := \mathbf{P} \left(  \mathbf{u} + \int_{0}^{\infty} e^{-s} \mathbf{N} \big( \Phi (s) \big) ds \right)
\end{align}
from the initial data. Consequently, we consider the fixed point problem
\begin{align} \label{modified}
\Phi (\tau)= \mathbf{K} ( \Phi, \mathbf{u})(\tau)
\end{align}
where
\begin{align} \label{K}
\mathbf{K} (\Phi, \mathbf{u}) (\tau):=\mathbf{S} (\tau) [\mathbf{u} - \mathbf{C}(\Phi,\mathbf{u})] + \int_{0}^{\tau} \mathbf{S} (\tau - s) \mathbf{N} \big( \Phi(s) \big) ds. 
\end{align}
This modification stabilizes the evolution as the following result shows.

\begin{theorem} \label{th2}
There exist constants $\delta,C>0$ such that for every $\mathbf{u} \in \mathcal{H}$ with $\| \mathbf{u} \| \leq \frac{\delta}{C}$, there exists a unique $\mathbf{\Phi} (\mathbf{u}) \in \mathcal{X}_{\delta}$ that satisfies
\begin{align*}
\mathbf{\Phi} (\mathbf{u}) = \mathbf{K} (\mathbf{\Phi} (\mathbf{u}),\mathbf{u}).
\end{align*}
In addition, $\mathbf{\Phi}(\mathbf u)$ is unique in the whole space $\mathcal X$ and the solution map $\mathbf{u} \mapsto \mathbf{\Phi }(\mathbf{u})$ is Lipschitz continuous.
\end{theorem}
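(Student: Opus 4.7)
The plan is to apply the Banach fixed-point theorem to $\mathbf{K}(\cdot, \mathbf{u})$ on the closed ball $\mathcal{X}_\delta$, where the correction term $\mathbf{C}(\Phi, \mathbf{u}) \in \rg\mathbf{P} = \langle \mathbf{g}\rangle$ is precisely designed to kill the growing unstable component of the evolution. The whole argument hinges on the spectral decomposition $\Phi = \mathbf{P}\Phi + (1-\mathbf{P})\Phi$ afforded by Proposition \ref{projection}, combined with the Lipschitz estimate \eqref{Lipschitz} for $\mathbf{N}$.

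First I would show that $\mathbf{K}(\cdot, \mathbf{u})$ maps $\mathcal{X}_\delta$ into itself when $\|\mathbf{u}\| \leq \delta/C$ with $C$ sufficiently large. Applying $\mathbf{P}$ to the definition of $\mathbf{K}$ and using $\mathbf{S}(\tau)\mathbf{P} = e^\tau \mathbf{P}$ together with $\mathbf{P}\mathbf{C}(\Phi, \mathbf{u}) = \mathbf{C}(\Phi, \mathbf{u})$ (since $\mathbf{C}$ takes values in $\rg\mathbf{P}$) produces the key cancellation
\begin{equation*}
\mathbf{P}\mathbf{K}(\Phi, \mathbf{u})(\tau) = -\int_\tau^\infty e^{\tau - s}\, \mathbf{P}\mathbf{N}(\Phi(s))\, ds,
\end{equation*}
which converts the apparent $e^\tau$ blowup into a tail integral. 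Since $\Phi \in \mathcal{X}_\delta$ and \eqref{Lipschitz} imply $\|\mathbf{N}(\Phi(s))\| \lesssim \delta^2 e^{-2\epsilon s}$, the right-hand side is bounded by a multiple of $\delta^2 e^{-2\epsilon \tau}$. For the stable part, I would apply \eqref{semigroup2}, use $(1-\mathbf{P})\mathbf{C}(\Phi, \mathbf{u}) = 0$, and estimate the Duhamel contribution by $\int_0^\tau e^{-\epsilon(\tau-s)} \delta^2 e^{-2\epsilon s}\,ds \lesssim \delta^2 e^{-\epsilon \tau}$, yielding $\|(1-\mathbf{P})\mathbf{K}(\Phi, \mathbf{u})(\tau)\| \lesssim e^{-\epsilon\tau}(\|\mathbf{u}\| + \delta^2)$. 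Combining the two bounds gives $\|\mathbf{K}(\Phi, \mathbf{u})\|_\mathcal{X} \lesssim \|\mathbf{u}\| + \delta^2 \leq \delta$ provided $\delta$ and $1/C$ are chosen small enough.

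Next I would establish the contraction property. For $\Phi, \Psi \in \mathcal{X}_\delta$, the difference $\mathbf{K}(\Phi, \mathbf{u}) - \mathbf{K}(\Psi, \mathbf{u})$ depends only on $\mathbf{N}(\Phi) - \mathbf{N}(\Psi)$, and \eqref{Lipschitz} gives $\|\mathbf{N}(\Phi(s)) - \mathbf{N}(\Psi(s))\| \lesssim \delta e^{-\epsilon s}\|\Phi(s) - \Psi(s)\|$. Repeating the projection-based decomposition (now with no $\mathbf{u}$ contribution) produces $\|\mathbf{K}(\Phi, \mathbf{u}) - \mathbf{K}(\Psi, \mathbf{u})\|_\mathcal{X} \lesssim \delta\|\Phi - \Psi\|_\mathcal{X}$, a contraction after shrinking $\delta$. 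Banach's theorem then delivers the unique fixed point $\mathbf{\Phi}(\mathbf{u}) \in \mathcal{X}_\delta$. Lipschitz dependence on $\mathbf{u}$ follows by writing $\mathbf{\Phi}(\mathbf{u}) - \mathbf{\Phi}(\mathbf{v}) = \mathbf{K}(\mathbf{\Phi}(\mathbf{u}), \mathbf{u}) - \mathbf{K}(\mathbf{\Phi}(\mathbf{v}), \mathbf{v})$, separating into a contraction piece (absorbed on the left) and a piece linear in $\mathbf{u} - \mathbf{v}$ bounded by $\|\mathbf{u} - \mathbf{v}\|$. Uniqueness in the full space $\mathcal{X}$ is a standard bootstrap: any fixed point $\Psi \in \mathcal{X}$ satisfies $\|\Psi(\tau)\| \leq \|\Psi\|_\mathcal{X} e^{-\epsilon \tau}$, and the same estimates, applied with the ambient $\mathcal X$-norm in place of $\delta$, force $\Psi \in \mathcal{X}_\delta$.

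The only genuinely non-routine ingredient is the telescoping identity for $\mathbf{P}\mathbf{K}(\Phi, \mathbf{u})$. That identity, and nothing else, is what converts the linear instability from Proposition \ref{OhaioProp} into the exponential decay needed for the fixed point to land in $\mathcal{X}_\delta$; everything else amounts to bookkeeping with \eqref{Lipschitz} and \eqref{semigroup2}.
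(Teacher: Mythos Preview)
Your proposal is correct and follows essentially the same approach as the paper's proof: apply Banach's fixed-point theorem on $\mathcal{X}_\delta$ after splitting $\mathbf{K}$ via the spectral projection $\mathbf{P}$, using the Lipschitz bound \eqref{Lipschitz} and the semigroup estimates from Proposition \ref{projection}. In fact you make explicit the telescoping identity $\mathbf{P}\mathbf{K}(\Phi,\mathbf{u})(\tau)=-\int_\tau^\infty e^{\tau-s}\mathbf{P}\mathbf{N}(\Phi(s))\,ds$ that the paper leaves implicit (it only records the resulting bounds and refers to \cite{DonSch16} for details), so your write-up is if anything more transparent about the mechanism behind the $\mathbf{P}$-estimate.
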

\begin{proof}
The proof is based on a fixed point argument and the essential ingredient is the Lipschitz estimate \eqref{Lipschitz} for the nonlinearity. Although the proof coincides with the one of Theorem 4.13 in \cite{DonSch16}, we sketch the main points for the sake of completeness. We pick $\delta>0$ sufficiently small and fix $\mathbf{u} \in \mathcal{H}$ with $\| \mathbf{u} \| \leq \frac{\delta}{C}$, where $C>0$ is sufficiently large. First, note that the continuity of the map
\begin{align*}
\mathbf{K} (\Phi, \mathbf{u}): [0,\infty) \longrightarrow \mathcal{H},\quad \tau \longmapsto \mathbf{K} (\Phi, \mathbf{u})(\tau)
\end{align*}
follows immediately from the strong continuity of the semigroup $\left( \mathbf{S}(\tau) \right)_{\tau >0}$. Next, to show that $\mathbf K(\cdot,\mathbf u)$ maps $\mathcal X_\delta$ to itself,
we pick an arbitrary $\Phi \in \mathcal{X}_{\delta} $ and decompose the operator according to
\begin{align*}
\mathbf{K} ( \Phi, \mathbf{u})(\tau) = \mathbf{P} \mathbf{K} ( \Phi, \mathbf{u})(\tau) + (1-\mathbf{P} ) \mathbf{K} ( \Phi, \mathbf{u})(\tau).
\end{align*}
The Lipschitz bound \eqref{Lipschitz} implies
\begin{align*}
\left \|  \mathbf{N} \left(  \Phi (\tau) \right) \right \| \lesssim \delta^2 e^{-2 \epsilon \tau}
\end{align*}
and together with the time evolution estimates for the semigroup on the unstable and stable subspaces (see Proposition \ref{projection}), we get
\begin{align*}
 \left \|  \mathbf{P} \mathbf{K} \left( \Phi, \mathbf{u} \right) (\tau) \right \| \lesssim \delta ^{2} e^{-2 \epsilon  \tau}, \quad \left \| \left(1- \mathbf{P} \right) \mathbf{K} \left( \Phi, \mathbf{u} \right) (\tau) \right \| \lesssim (\tfrac{\delta}{C}+\delta^2) e^{-\epsilon  \tau} .
\end{align*}
Clearly, these estimates imply that $ \mathbf{K} ( \Phi, \mathbf{u}) \in \mathcal{X}_{\delta}$ for sufficiently small $\delta$ and sufficiently large $C>0$. Finally, we need to show the contraction property. To this end, we pick two elements $\Phi,\widetilde{\Phi} \in \mathcal{X}_{\delta}$.
As before, the Lipschitz estimate \eqref{Lipschitz} together with Proposition \ref{projection} imply
\begin{align*}
 \left \| \mathbf{P} \left( \mathbf{K} ( \Phi, \mathbf{u})(\tau) - \mathbf{K} ( \widetilde{ \Phi }, \mathbf{u})(\tau)  \right) \right \| &\lesssim  \delta e^{-\epsilon \tau} \left \| \Phi - \widetilde{\Phi} \right \|_{\mathcal X}, \\
 \left \| \left(1- \mathbf{P}\right) \left( \mathbf{K} ( \Phi, \mathbf{u})(\tau) - \mathbf{K} ( \widetilde{ \Phi }, \mathbf{u})(\tau)  \right) \right \| &\lesssim  \delta e^{-\epsilon\tau} \left \| \Phi - \widetilde{\Phi} \right \|_{\mathcal X}
\end{align*}
and by choosing $\delta$ sufficiently small we conclude
\begin{align*}
\left \| \mathbf{K} ( \Phi, \mathbf{u}) - \mathbf{K} ( \widetilde{ \Phi }, \mathbf{u})  \right \|_{\mathcal{X}} \leq \frac{1}{2}  \left \| \Phi - \widetilde{\Phi} \right \|_{\mathcal{X}}.
\end{align*} 
Consequently, the claim follows by the contraction mapping principle. Uniqueness in the whole space $\mathcal X$ and the Lipschitz continuity of the solution map are routine and we omit the details.
\end{proof}

Now we turn to the particular initial data we prescribe. To this end, we define the space
\begin{align*}
\mathcal{H}^{R} := H_{\text{rad}}^{m} (\mathbb{B}_{R}^{d+2} ) \times H_{\text{rad}}^{m-1} (\mathbb{B}_{R}^{d+2}), \quad m\equiv m_{d} =\frac{d+3}{2}
\end{align*}
for $R>0$, endowed with the induced norm
\begin{align*}
\left \| \mathbf{w} \right \|_{\mathcal{H}^{R}}^{2} = \left \| (w_{1},w_{2}) \right \|_{\mathcal{H}^{R}}^{2}  =\left  \| w_{1}\right  \|_{ H_{\text{rad}}^{m} \left( \mathbb{B}^{d+2}_R \right) } + 
\left   \| w_{2} \right \|_{ H_{\text{rad}}^{m-1} \left( \mathbb{B}^{d+2}_R \right) }. 
\end{align*}
Recall the definition of the initial data operator $\mathbf U(\mathbf v, T)$ from Eq.~\eqref{5}.

\begin{lemma} \label{th1}
Fix $T_0>0$. 
Let $\delta>0$ be sufficiently small and $\mathbf{v}$ with $|\cdot|^{-1} \mathbf{v} \in \mathcal{H}^{T_{0} + \delta}$. Then, the map
\begin{align*}
\mathbf{U} (\mathbf{v},\cdot):[T_{0}-\delta,T_{0}+\delta] \longrightarrow \mathcal{H},\quad T \longmapsto \mathbf{U} (\mathbf{v},T)
\end{align*}
is continuous. Furthermore, for all $T \in [T_{0} -\delta,T_{0}+\delta]$,
\begin{align*}
\big \| |\cdot|^{-1} \mathbf{v}\big \|_{\mathcal{H}^{T_{0}+\delta} } \leq \delta \Longrightarrow  \big \| \mathbf{U} (\mathbf{v},T)\big  \|  \lesssim \delta.
\end{align*} 
\end{lemma}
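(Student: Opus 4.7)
The plan is to write $\mathbf U(\mathbf v, T) = \mathbf A(T) + \mathbf V(\mathbf v, T)$, where $\mathbf A(T)$ denotes the $f_0$-difference in \eqref{5}, and to handle the two summands by separate arguments.

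For the deterministic piece $\mathbf A(T)$, I would exploit that the map $x\mapsto f_0(x)/x = \tfrac{2}{x}\arctan(x/\sqrt{d-2})$ extends to a smooth even function on all of $\mathbb R$. Consequently, each entry of $\mathbf A(T)(\rho)$ is a smooth function of $(T,\rho)\in[T_0-\delta,T_0+\delta]\times[0,1]$ that vanishes identically at $T=T_0$. A Taylor expansion in $T$ around $T_0$ then gives
\[
\mathbf A(T)(\rho) = (T-T_0)\int_0^1 \partial_T\mathbf A\bigl(T_0+s(T-T_0)\bigr)(\rho)\,ds,
\]
where the integrand is smooth in $\rho$ with smooth, hence uniformly bounded, dependence on $T$. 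This immediately yields $\|\mathbf A(T)\|\lesssim |T-T_0|\leq \delta$ and continuity (in fact $C^\infty$-smoothness) of $T\mapsto \mathbf A(T)$ into $\mathcal H$.

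For the data piece, I would set $h_F(r):=F(r)/r$, $h_G(r):=G(r)/r$, so that by hypothesis $(h_F,h_G)=|\cdot|^{-1}\mathbf v\in \mathcal H^{T_0+\delta}$, and rewrite
\[
\mathbf V(\mathbf v,T)(\rho) \;=\; \bigl(\,T\, h_F(T\rho),\; T^2\, h_G(T\rho)\,\bigr).
\]
Since $T\rho\in[0,T_0+\delta]$ whenever $\rho\in[0,1]$ and $T\in[T_0-\delta,T_0+\delta]$, the evaluation at $T\rho$ stays inside the larger ball. A change of variables $\xi\mapsto T\xi$ then yields, for $T\in[T_0-\delta,T_0+\delta]$ (uniformly bounded away from $0$ and $\infty$ for $\delta$ small),
\[
\|h_F(T\cdot)\|_{H^m(\mathbb B_1^{d+2})}^2 \;=\; \sum_{k=0}^m T^{2k-(d+2)}\,\|\nabla^k h_F\|_{L^2(\mathbb B_T^{d+2})}^2 \;\lesssim\; \|h_F\|_{H^m(\mathbb B_{T_0+\delta}^{d+2})}^2,
\]
and the analogous bound in $H^{m-1}$ for $h_G$. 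Combined with the factors $T$ and $T^2$ (both bounded uniformly), this gives $\|\mathbf V(\mathbf v,T)\|\lesssim \||\cdot|^{-1}\mathbf v\|_{\mathcal H^{T_0+\delta}}\leq \delta$, which completes the norm bound.

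Continuity of $T\mapsto\mathbf V(\mathbf v,T)$ would follow from a standard density argument: for $h$ smooth and compactly supported in $\mathbb B_{T_0+\delta}^{d+2}$, direct differentiation shows that $T\mapsto h(T\cdot)$ is $C^1$ into $H^m(\mathbb B_1^{d+2})$; the general case is obtained by approximating $h_F$ in $H^m(\mathbb B_{T_0+\delta}^{d+2})$ and $h_G$ in $H^{m-1}(\mathbb B_{T_0+\delta}^{d+2})$, and invoking the uniform dilation bound above to pass to the limit. I expect the main nuisance, rather than a genuine obstacle, to be bookkeeping in the change-of-variables across the nested balls $\mathbb B_1^{d+2}\subset \mathbb B_T^{d+2}\subset \mathbb B_{T_0+\delta}^{d+2}$; no new analytic difficulty should arise beyond this.
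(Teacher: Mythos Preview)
Your proposal is correct and follows precisely the approach the paper indicates: the paper's own proof simply states that the claims follow from the definition of $\mathbf U(\mathbf v,T)$, the smoothness of $\rho\mapsto f_0(\rho)/\rho$, and the continuity of rescaling in Sobolev spaces, omitting all details. You have supplied exactly those details, with the natural decomposition $\mathbf U=\mathbf A+\mathbf V$ and the expected scaling and density arguments.
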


\begin{proof}
The statements are straightforward consequences of the very definition of $\mathbf U(\mathbf v,T)$, the smoothness of $\frac{f_0(\rho)}{\rho}$, and the continuity of rescaling in Sobolev spaces. We omit the details.
\end{proof}

Finally, given $T_{0}>0$ and $\mathbf{v} \in \mathcal{H}^{T_{0} + \delta}$ with $\| |\cdot|^{-1} \mathbf{v} \|_{\mathcal{H}^{T_{0}+\delta} } \leq \frac{\delta}{M}$ for $\delta>0$ sufficiently small and $M>0$ sufficiently large, we apply Lemma \ref{th1} to see that $\mathbf{u}:=\mathbf{U} (\mathbf{v},T)$ satisfies the assumptions of Theorem \ref{th2} for all $T\in [T_0-\delta,T_0+\delta]$. Hence, for all $T \in [T_{0}-\delta,T_{0}+\delta]$, the map $\mathbf K(\cdot,\mathbf U(\mathbf v,T))$ has a fixed point $\Phi_{T}:= \mathbf{\Phi} (\mathbf U(\mathbf v,T)) \in \mathcal{X}_{\delta}$. In the last step we now argue that for each $\mathbf v$, there exists a particular $T_{\mathbf{v}} \in [T_{0}-\delta,T_{0}+\delta]$ that makes the correction term vanish, i.e., $\mathbf C(\Phi_{T_{\mathbf v}},\mathbf U(\mathbf v,T_{\mathbf v}))=0$. Since $\mathbf C$ has values in $\rg \mathbf P=\langle\mathbf g\rangle$, the latter is equivalent to 
\begin{align}
\label{eq:Tv}
\exists T_{\mathbf{v}} \in [T_{0}-\delta,T_{0}+\delta]: \quad \Big< \mathbf{C} \left( \Phi _{T_{\mathbf{v}}},\mathbf{U} \left( \mathbf{v},T_{\mathbf{v}} \right) \right),\mathbf{g} \Big>_{\mathcal H} = 0.
\end{align}
The key observation now is that
\[ \partial_T \left . \left ( \begin{array}{c}
\frac{1}{\rho}f_0(\frac{T}{T_0}\rho) \\
\frac{T^2}{T_0^2}f_0'(\frac{T}{T_0}\rho) \end{array} \right )\right |_{T=T_0}=\frac{2\sqrt{d-2}}{T_0}\,\mathbf g(\rho) \]
and thus, we have the expansion
\[ \Big< \mathbf{C} \left( \Phi_T,\mathbf U (\mathbf{v},T ) \right),\mathbf{g} \Big>_{\mathcal H}=\frac{2\sqrt{d-2}}{T_0}\|\mathbf g\|^2(T-T_0)
+O((T-T_0)^2)+O(\tfrac{\delta}{M}T^0)+O(\delta^2T^0).
\]
Consequently, a simple fixed point argument proves \eqref{eq:Tv}, see \cite{DonSch16}, Theorem 4.15 for full details.
In summary, we arrive at the following result.

\begin{theorem} \label{correction}
Fix $T_0>0$. Then there exist $\delta,M >0$ such that for any $\mathbf{v}$ with  
\[ \| |\cdot|^{-1} \mathbf{v} \|_{\mathcal{H}^{T_{0}+\delta} } \leq \frac{\delta}{M} \] there exists a $T \in [T_{0}-\delta,T_{0}+\delta]$ and a function $\Phi \in \mathcal{X}_{\delta}$ which satisfies 
\begin{align} 
 \label{eq:int}
\Phi (\tau) = \mathbf{S} (\tau) \mathbf{U} (\mathbf{v},T) + \int_{0}^{\tau} \mathbf{S} (\tau-s) \mathbf{N} \big( \Phi (s) \big) ds
\end{align}
for all $\tau \geq 0$. Furthermore, $\Phi$ is unique in $C \big( [0,\infty);\mathcal{H} \big)$.
\end{theorem}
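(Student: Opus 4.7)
The plan is to combine the abstract fixed-point result (Theorem \ref{th2}) with a one-parameter selection that kills the correction term $\mathbf C$. First, for $\delta>0$ sufficiently small and $M$ sufficiently large, Lemma \ref{th1} guarantees that $\|\mathbf U(\mathbf v,T)\|\lesssim \delta/M$ for every $T\in [T_0-\delta,T_0+\delta]$. Hence Theorem \ref{th2} yields a unique $\Phi_T\in\mathcal X_\delta$ with
\[ \Phi_T = \mathbf K(\Phi_T,\mathbf U(\mathbf v,T)). \]
Moreover, the Lipschitz continuity of the solution map in Theorem \ref{th2}, combined with continuity of $T\mapsto \mathbf U(\mathbf v,T)$ from Lemma \ref{th1}, makes the assignment $T\mapsto \Phi_T$ continuous from $[T_0-\delta,T_0+\delta]$ into $\mathcal X_\delta$.

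Next, I would compare \eqref{eq:int} with the modified fixed-point equation solved by $\Phi_T$. From \eqref{Correction} and \eqref{K}, $\Phi_T$ solves \eqref{eq:int} precisely when $\mathbf C(\Phi_T,\mathbf U(\mathbf v,T))=0$. By Proposition \ref{projection}, $\mathbf C$ takes values in $\mathrm{rg}\,\mathbf P=\langle\mathbf g\rangle$, so this vector condition collapses to the single real equation
\[ F(T) := \big\langle \mathbf C(\Phi_T,\mathbf U(\mathbf v,T)),\mathbf g\big\rangle_{\mathcal H}=0. \]
Thus one free real parameter remains to balance one real constraint.

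To solve $F(T)=0$, I would use the explicit expansion indicated in the excerpt. The identity $\partial_T\bigl(\rho^{-1}f_0(\tfrac{T}{T_0}\rho),\tfrac{T^2}{T_0^2}f_0'(\tfrac{T}{T_0}\rho)\bigr)^{\top}\big|_{T=T_0}=\tfrac{2\sqrt{d-2}}{T_0}\mathbf g(\rho)$ shows that the $T$-derivative of the background piece of $\mathbf U(\mathbf v,T)$ at $T_0$ is a nonzero multiple of the unstable direction $\mathbf g$. Expanding $F(T)$ around $T_0$, using that $\Phi_T\in\mathcal X_\delta$ and that $\mathbf v$ contributes a term of size at most $\delta/M$, and estimating the nonlinear remainder via \eqref{Lipschitz} (which gives $\|\mathbf N(\Phi_T(s))\|\lesssim \delta^2 e^{-2\epsilon s}$), one obtains
\[ F(T) = \tfrac{2\sqrt{d-2}}{T_0}\|\mathbf g\|^2(T-T_0) + O\big((T-T_0)^2\big) + O(\tfrac{\delta}{M}) + O(\delta^2). \]
Since the linear coefficient is strictly positive, shrinking $\delta$ and enlarging $M$ recasts $F(T)=0$ as a strict contraction from $[T_0-\delta,T_0+\delta]$ into itself. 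Banach's fixed point theorem then delivers the desired $T\equiv T_{\mathbf v}$, and $\Phi:=\Phi_{T_{\mathbf v}}$ solves \eqref{eq:int}.

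Uniqueness in $C([0,\infty);\mathcal H)$ does not follow directly from Theorem \ref{th2}, which only gives uniqueness in $\mathcal X$, so I would close this gap by a bootstrap argument: given two continuous $\mathcal H$-valued solutions of \eqref{eq:int} with the same data, write the difference via Duhamel, split according to the projection $\mathbf P$, and apply Proposition \ref{projection} together with \eqref{Lipschitz} and a Gronwall-type iteration to conclude that they coincide. The main obstacle in the whole scheme is the rigorous derivation of the expansion of $F(T)$, because $\Phi_T$ enters $\mathbf C$ nonlinearly and depends on $T$ only implicitly through the fixed-point equation; controlling the quadratic and mixed remainders requires the Lipschitz dependence of the fixed point on the data together with the smallness $\|\Phi_T\|_{\mathcal X}\le\delta$.
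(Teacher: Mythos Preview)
Your proposal is correct and follows essentially the same route as the paper: apply Lemma~\ref{th1} and Theorem~\ref{th2} to produce $\Phi_T$ for each $T\in[T_0-\delta,T_0+\delta]$, reduce the vanishing of the correction $\mathbf C$ to the scalar equation $F(T)=0$ via $\mathrm{rg}\,\mathbf P=\langle\mathbf g\rangle$, expand $F$ around $T_0$ using the identity $\partial_T(\cdot)|_{T=T_0}=\tfrac{2\sqrt{d-2}}{T_0}\mathbf g$, and close with a one-dimensional fixed-point argument. Your remarks on the continuity of $T\mapsto\Phi_T$ and on upgrading uniqueness from $\mathcal X$ to $C([0,\infty);\mathcal H)$ make explicit two points the paper leaves implicit (the latter is deferred to \cite{DonSch16}), but otherwise the arguments coincide.
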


\subsection{Proof of the main theorem} With the results of the previous section at hand, we can now prove the main theorem. Fix $T_0>0$ and suppose the radial initial data 
$\psi [0]$ satisfy
\begin{align*}
\left  \| |\cdot|^{-1} \Big( \psi [0] -\psi^{T_{0}}[0]  \Big) \right \|_{ H^{\frac{d+3}{2}} (\mathbb{B}_{T_{0}+\delta}^{d+2}) \times H^{\frac{d+1}{2}} (\mathbb{B}_{T_{0}+\delta}^{d+2}  )} \leq \frac{\delta}{M}
\end{align*}
with $\delta,M>0$ from Theorem \ref{correction}.
We set $\mathbf v:=\psi[0]-\psi^{T_0}[0]$, cf.~Section \ref{sec:sim}.
Then we have
\begin{align*}
\left \| |\cdot|^{-1} \mathbf{v} \right \|_{\mathcal{H}^{T_{0}+\delta}}  = \left \| |\cdot|^{-1} \Big( \psi [0] -\psi^{T_{0}}[0] \Big) 
\right \|_{\mathcal{H}^{T_{0}+\delta}} \leq \frac{\delta}{M}
\end{align*}
and Theorem $\ref{correction}$ yields the existence of 
$T \in [T_{0}-\delta,T_{0}+\delta]$ such that Eq.~\eqref{eq:int} has a unique solution $\Phi \in \mathcal{X}$ that satisfies $\| \Phi (\tau) \| \leq \delta e^{-\epsilon \tau}$ for all $\tau\geq 0$.
By construction,
\[ \psi(t,r)=\psi^T(t,r)+\frac{r}{T-t}\phi_1\left (\log\frac{T}{T-t},\frac{r}{T-t}\right ) \]
is a solution to the original wave maps problem \eqref{cauchy}. Furthermore, 
\[ \partial_t \psi(t,r)=\partial_t \psi^T(t,r)+\frac{r}{(T-t)^2}\phi_2 \left (\log\frac{T}{T-t},\frac{r}{T-t}\right ). \]
Consequently,
\begin{align*}
(T-t)^{k-\frac{d}{2}}&\left \||\cdot|^{-1}\left (\psi(t,\cdot)-\psi^T(t,\cdot)\right )
\right \|_{\dot H^k(\mathbb B^{d+2}_{T-t})}  \\
&=(T-t)^{k-\frac{d}{2}-1}\left \|\phi_1\left (\log\frac{T}{T-t},\frac{|\cdot|}{T-t}\right ) \right \|_{\dot H^k(\mathbb B^{d+2}_{T-t})} \\
&=\left \|\phi_1\left (\log\frac{T}{T-t},\cdot\right ) \right \|_{\dot H^k(\mathbb B^{d+2})}
\leq \left \|\Phi\left (\log\frac{T}{T-t}\right )\right \| \\
&\leq \delta (T-t)^\epsilon
\end{align*}
for all $t\in [0,T)$ and $k=0,1,2,\dots,\frac{d+3}{2}$.
Analogously,
\begin{align*}
(T-t)^{\ell-\frac{d}{2}+1}&\left \||\cdot|^{-1}\left (\partial_t \psi(t,\cdot)-\partial_t \psi^T(t,\cdot)\right )\right \|_{\dot H^\ell(\mathbb B^{d+2}_{T-t})} \\
&=(T-t)^{\ell-\frac{d}{2}-1}\left \|\phi_2\left (\log\frac{T}{T-t},\frac{|\cdot|}{T-t}\right ) \right \|_{\dot H^\ell(\mathbb B^{d+2}_{T-t})} \\
&=\left \|\phi_2\left (\log\frac{T}{T-t},\cdot\right ) \right \|_{\dot H^\ell(\mathbb B^{d+2})}
\leq \left \|\Phi\left (\log\frac{T}{T-t}\right )\right \| \\
&\leq \delta (T-t)^\epsilon
\end{align*}
for all $\ell=0,1,2,\dots,\frac{d+1}{2}$.

\appendix

\section{Properties of the function $\mathcal I_d$}

\noindent 
We first derive a consequence of results from \cite{CosDonGlo16} which then leads to the desired statement that $\mathcal I_d$ is not analytic at $1$. 
Recall the supersymmetric problem Eq.~(4.1) from \cite{CosDonGlo16},
\begin{equation}\label{eq:SUSY}
(1-\rho^2)\tilde{u}_\lambda''+\left[\frac{k+1}{\rho}-2(\lambda+1)\rho\right]\tilde{u}'_\lambda
-\lambda(\lambda+1)\tilde{u}_\lambda+\frac{2k}{\rho^2}\frac{\rho^2-k-2}{\rho^2+k}\tilde{u}_\lambda=0,
\end{equation} 
where $d=k+2$.

\begin{lemma}\label{lem:U}
Let $m\in \mathbb N$, $m\geq 2$, and $d=2m+1$.
Then the function 
	\begin{equation*}
\mathcal{U}_m(\rho):=(1-\rho^2)^{m-1}\int_{0}^{\rho}\frac{y^{2m+2}}{(1-y^2)^m}g_1(y)^2
dy,\qquad g_1(y)=\frac{1}{y^2+d-2}
	\end{equation*}
	is not analytic at $\rho=1$.
\end{lemma}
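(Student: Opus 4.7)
The plan is to exhibit a term of the form $(1-\rho)^{m-1}\log(1-\rho)$ in the expansion of $\mathcal{U}_m$ near $\rho=1$; such a term is $C^{m-2}$ but not $C^{m-1}$ there, and hence not analytic. To do so I would first rewrite the integrand as
\[ \frac{y^{2m+2}g_1(y)^2}{(1-y^2)^m}=\frac{F(y)}{(1-y)^m},\qquad F(y):=\frac{y^{2m+2}}{(1+y)^m(y^2+d-2)^2}, \]
where $F$ is real-analytic on a neighbourhood of $y=1$, and Taylor expand $F(y)=\sum_{j\geq 0}c_j(1-y)^j$. Termwise integration from $0$ to $\rho$ then yields
\[ \int_0^\rho \frac{F(y)}{(1-y)^m}\,dy = \sum_{j=0}^{m-2}\frac{c_j}{m-j-1}(1-\rho)^{-(m-j-1)} - c_{m-1}\log(1-\rho) + R(\rho), \]
with $R$ analytic at $\rho=1$.

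Multiplying through by the polynomial prefactor $(1-\rho^2)^{m-1}=(1-\rho)^{m-1}(1+\rho)^{m-1}$ turns every pole above into a non-negative integer power of $1-\rho$ (hence an analytic contribution), while the logarithmic term becomes
\[ -c_{m-1}(1-\rho)^{m-1}(1+\rho)^{m-1}\log(1-\rho)=-2^{m-1}c_{m-1}(1-\rho)^{m-1}\log(1-\rho)+\text{h.o.t.}, \]
where h.o.t.\ denotes terms of the form $(1-\rho)^j\log(1-\rho)$ with $j\geq m$. Thus $\mathcal{U}_m$ decomposes, on a neighbourhood of $\rho=1$, as a function analytic at $1$ plus a tail whose leading singular term is a nonzero scalar multiple of $(1-\rho)^{m-1}\log(1-\rho)$, \emph{provided} $c_{m-1}\neq 0$.

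The crux is therefore showing $c_{m-1}=(-1)^{m-1}F^{(m-1)}(1)/(m-1)!\neq 0$. For small $m$ this is a direct computation --- for instance, $m=2$ yields $c_1=-\tfrac{1}{16}$ --- but for a uniform argument in $m$ I would appeal to the supersymmetric framework of \cite{CosDonGlo16}. The integral in $\mathcal{U}_m$ is, up to normalisation, precisely the variation-of-parameters integral that arises when attempting to solve the inhomogeneous version of \eqref{eq:SUSY} at $\lambda=1$ with source proportional to the eigenfunction $g_1$. The Frobenius analysis of \eqref{eq:SUSY} carried out in \cite{CosDonGlo16} produces an explicit compatibility constant at $\rho=1$ whose vanishing is necessary for a generalised eigenfunction analytic on $[0,1]$ to exist; that constant is, up to a nonzero combinatorial factor, exactly $c_{m-1}$. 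The main obstacle in writing the proof is therefore not the expansion step above, which is routine bookkeeping, but carefully setting up the dictionary between the SUSY compatibility constant from \cite{CosDonGlo16} and the Taylor coefficient $c_{m-1}$; once this identification is in place, the non-vanishing follows from the positivity-type statements already established there.
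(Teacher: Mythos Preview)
Your expansion step and the reduction to the statement $c_{m-1}\neq 0$ are correct, but the crucial final step is not carried out and rests on a mischaracterization. The integral in $\mathcal{U}_m$ is not a variation-of-parameters integral for an \emph{inhomogeneous} version of \eqref{eq:SUSY}; it is the reduction-of-order integral producing a second solution of the \emph{homogeneous} equation \eqref{eq:SUSY} at $\lambda=1$. Nor does \cite{CosDonGlo16} supply an ``explicit compatibility constant'' for a generalised eigenfunction of \eqref{eq:SUSY} that one could match with $c_{m-1}$; the relevant input from that paper is a spectral exclusion statement (no nontrivial smooth solution on $[0,1]$ at $\lambda=1$), not a coefficient computation. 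So the dictionary you propose does not obviously exist, and the argument has a genuine gap at precisely the point you yourself flag as the main obstacle.

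The paper's proof bypasses $c_{m-1}$ entirely. Setting $v_1(\rho)=\rho^{m+1}(1-\rho^2)^{1-m/2}g_1(\rho)$, one checks that $v_1(y)^2/(1-y^2)^2=y^{2m+2}g_1(y)^2/(1-y^2)^m$, so the integral defining $\mathcal{U}_m$ is exactly $\int_0^\rho v_1(y)^2/(1-y^2)^2\,dy$. Abel's reduction formula then makes $\tilde v_1(\rho):=v_1(\rho)^{-1}\int_0^\rho v_1(y)^2/(1-y^2)^2\,dy$ a second solution of the factored SUSY equation, and after undoing the transformation, $\tilde u_1(\rho)=\mathcal{U}_m(\rho)/(g_1(\rho)\rho^{2m+1})$ solves \eqref{eq:SUSY} at $\lambda=1$. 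This $\tilde u_1$ is visibly analytic at $\rho=0$; were $\mathcal{U}_m$ (hence $\tilde u_1$) also analytic at $\rho=1$, one would have a nontrivial $C^\infty[0,1]$ solution of \eqref{eq:SUSY} at $\lambda=1$, contradicting Theorem~4.1 of \cite{CosDonGlo16}. Non-analyticity thus follows in one stroke from the spectral exclusion already established there, with no need to isolate any individual Taylor coefficient.
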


\begin{proof}
In view of the supersymmetric factorization derived in \cite{CosDonGlo16} (or by a direct computation) it follows that $\tilde u_1$ satisfies Eq.~\eqref{eq:SUSY} for $\lambda=1$ if and only if $\tilde v_1(\rho)=\rho^m (1-\rho^2)^{-\frac{m}{2}}\tilde u_1(\rho)$ satisfies 
\begin{equation}\label{Eq:Factored}
	(\partial_\rho-w(\rho))[(1-\rho^2)^2(\partial_\rho+w(\rho))]\tilde{v}_1(\rho)=0, 
\end{equation}
where $w=\frac{v_1'}{v_1}$ and
\[ v_1(\rho)=\rho^{m+1}(1-\rho^2)^{1-\frac{m}{2}} g_1(\rho). \]
Observe that the function $1/v_1$ solves Eq.~\eqref{Eq:Factored}.
Furthermore,
the Wronskian of two solutions of Eq.~\eqref{Eq:Factored} is of the form $\frac{c}{(1-\rho^2)^2}$ for some constant $c$ and thus,
the reduction formula yields another solution
\begin{align*}
\tilde{v}_1(\rho)&=\frac{1}{v_1(\rho)}\int_0^\rho \frac{v_1(y)^2}{(1-y^2)^2}dy 
=\frac{(1-\rho^2)^{\frac{m}{2}-1}}{g_1(\rho)\rho^{m+1}}\int_{0}^{\rho}\frac{y^{2m+2}}{(1-y^2)^m}g_1(y)^2dy.
\end{align*}
By construction,
\begin{equation*}
\tilde{u}_1(\rho)=\rho^{-m}(1-\rho^2)^\frac{m}{2}\tilde v_1(\rho)=\frac{(1-\rho^2)^{m-1}}{g_1(\rho)\rho^{2m+1}}\int_{0}^{\rho}\frac{y^{2m+2}}{(1-y^2)^m}g_1(y)^2dy=\frac{\mathcal U_m(\rho)}{g_1(\rho)\rho^{2m+1}}
\end{equation*}
is a solution to Eq.~\eqref{eq:SUSY}.
Clearly, $\tilde u_1$ is analytic at $\rho=0$. Suppose $\tilde u_1$ were analytic at $\rho=1$ also. Then we would have found a nontrivial solution $\tilde u_1\in C^\infty[0,1]$ to Eq.~\eqref{eq:SUSY} with $\lambda=1$. This, however, contradicts Theorem 4.1 in \cite{CosDonGlo16}. 
We conclude that $\tilde u_1$ and hence $\mathcal U_m$ must be nonanalytic at $\rho=1$.
\end{proof}

\begin{prop}
\label{prop:Id}
Let $d\geq 5$ be odd. 
Then the function $\mathcal I_d$ defined in Eq.~\eqref{def:Id} is not analytic at $\rho=1$.
\end{prop}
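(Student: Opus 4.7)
Set $m:=(d-1)/2$. The plan is to show that $\mathcal I_d(\rho)=\hat u_2(\rho)J_d(\rho)$, with
\begin{equation*}
J_d(\rho):=\int_0^\rho \frac{F_d(y)}{(1-y)^{m-1}}\,dy,
\end{equation*}
carries a nonzero $(1-\rho)^{m-1}\log(1-\rho)$ term. Expanding the smooth function $F_d(y)=\sum_{n\ge 0}\tilde b_n(1-y)^n$ at $y=1$ and integrating termwise, only the index $n=m-2$ produces a logarithm, yielding the contribution $-\tilde b_{m-2}\log(1-\rho)$ to $J_d(\rho)$; every other singular summand $(1-\rho)^{n-m+2}$ with $n<m-2$ gets absorbed into an analytic function once multiplied by $\hat u_2(\rho)=(1-\rho)^{m-1}\varphi(\rho)$, where $\varphi$ is analytic at $\rho=1$ with $\varphi(1)\ne 0$ (this follows from the expansion of $\hat u_2$ recorded in the proof of Proposition \ref{OhaioProp}), and the logarithm becomes $-\tilde b_{m-2}\varphi(\rho)(1-\rho)^{m-1}\log(1-\rho)$. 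Thus $\mathcal I_d$ is non-analytic at $\rho=1$ if and only if $\tilde b_{m-2}\ne 0$, and the non-analytic part is exactly of the claimed $(1-\rho)^{(d-3)/2}\log(1-\rho)$ type.

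To establish $\tilde b_{m-2}\ne 0$, I would proceed by contradiction: suppose $\tilde b_{m-2}=0$, so that $\mathcal I_d$ is analytic at $\rho=1$. Combined with the other two summands in the variation-of-constants expression for $u_1$ from Step 2 of Proposition \ref{projection}, which are themselves analytic at $\rho=1$ (the integrand $\hat u_2(y)G(y)y^{d+1}(1-y^2)^{-(m-1)}$ is smooth at $y=1$ because the zero of order $m-1$ of $\hat u_2$ cancels the pole of $(1-y^2)^{-(m-1)}$), the function $u_1$ would belong to $C^\infty[0,1]$ and solve the inhomogeneous ODE $Lu_1=G$. Passing to $v_1:=\rho u_1\in C^\infty[0,1]$ produces a smooth particular solution on $[0,1]$ of the inhomogeneous version of Eq.~\eqref{eq:specode} at $\lambda=1$ with smooth right-hand side $\rho G(\rho)$.

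Running the supersymmetric reduction of Lemma \ref{lem:U} on $v_1$---i.e., composing the two first-order intertwiners that factor the spectral operator at $\lambda=1$, the same ones used to rewrite Eq.~\eqref{eq:SUSY} as $(\partial_\rho-w)[(1-\rho^2)^2(\partial_\rho+w)]\tilde v_1=0$---should then transfer this smoothness to a $C^\infty[0,1]$ solution of the homogeneous Eq.~\eqref{eq:SUSY} at $\lambda=1$, contradicting Theorem 4.1 of \cite{CosDonGlo16}, which is precisely the non-existence statement underlying Lemma \ref{lem:U}. The main obstacle is this final SUSY transfer in the inhomogeneous setting: one has to verify that the two intertwiners preserve smoothness through the singular points $\rho=0$ and $\rho=1$ by tracking the indicial equations, and that the smooth inhomogeneity $\rho G(\rho)$ is mapped under the factored operator to something that can be absorbed into the known smooth solution $\tilde u_1=\mathcal U_m/(g_1\rho^{2m+1})$ of the homogeneous equation at $\rho=0$. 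Modulo this technical verification, $\tilde b_{m-2}\ne 0$ follows and the proposition is proved.
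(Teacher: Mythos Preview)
Your reduction to the condition $\tilde b_{m-2}\not=0$ is correct, and the identification of the logarithmic term is clean. The gap is in the final step. You want to conclude from a hypothetical $C^\infty[0,1]$ solution $u_1$ of the \emph{inhomogeneous} equation $(1-\mathbf L)\mathbf u=-\mathbf g$ that the \emph{homogeneous} supersymmetric equation \eqref{eq:SUSY} admits a smooth solution on $[0,1]$. But the SUSY intertwiners from Lemma~\ref{lem:U} factor the homogeneous operator; applied to an inhomogeneous solution they produce an inhomogeneous SUSY equation, not a homogeneous one. Your hope that the transformed right-hand side ``can be absorbed into the known smooth solution $\tilde u_1$'' is exactly the point that needs a calculation, and you have not done it. Without that, there is no contradiction with Theorem~4.1 of \cite{CosDonGlo16}. (A minor point: the expansion of $\hat u_2$ near $\rho=1$ is recorded in the proof of Proposition~\ref{projection}, not Proposition~\ref{OhaioProp}.)

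The paper circumvents this entirely. Using $G(y)=2yg_1'(y)+5g_1(y)$, the integrand in $\mathcal I_d$ can be rewritten so that a single integration by parts,
\[
\int_0^\rho \frac{y^{2m-2}}{(1-y^2)^{m-1}}\,\frac{d}{dy}\!\left(y^5 g_1(y)^2\right)dy
=\rho^{2m+3}g_1(\rho)^2-2(m-1)\int_0^\rho \frac{y^{2m+2}}{(1-y^2)^m}\,g_1(y)^2\,dy,
\]
reduces the question to the non-analyticity of $\mathcal U_m$, which is precisely the content of Lemma~\ref{lem:U} (a purely homogeneous SUSY statement). No inhomogeneous transfer is needed. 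So the missing idea is not a deeper SUSY argument but an elementary algebraic identity for $G$ followed by one integration by parts.
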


\begin{proof}
Since $\hat{u}_1=g_1$ and $G(y)=2yg'_1(y)+5g_1(y)$, we have
\[ \mathcal{I}_d(\rho)=\hat{u}_2(\rho)\int_{0}^{\rho}\frac{y^{d+1}}{(1-y^2)^{\frac{d-3}{2}}}\left [2yg_1(y)g'_1(y)+5g_1(y)^2 \right ] dy. \]
To simplify notation, we use the convention from above and write $d=2m+1$.
Since the order of the zero of $\hat{u}_2(\rho)$ at $\rho=1$ is $m-1$, it is enough to prove that
\[ \mathcal{J}_m(\rho):=(1-\rho^2)^{m-1}\int_{0}^{\rho}\frac{y^{2m+2}}{(1-y^2)^{m-1}}\left [2yg_1(y)g'_1(y)+5g_1(y)^2\right ]dy \]
is nonanalytic at $\rho=1$.
An integration by parts yields
\begin{align*}
	\mathcal{J}_m(\rho)&=(1-\rho^2)^{m-1}\int_{0}^{\rho}\frac{y^{2m-2}}{(1-y^2)^{m-1}}\frac{d}{dy}\left(y^5g_1(y)^2\right)dy\\
	&=\rho^{2m+3}g_1(\rho)^2-2(m-1)(1-\rho^2)^{m-1}\int_{0}^{\rho}\frac{y^{2m+2}}{(1-y^2)^{m}}g_1(y)^2dy
\end{align*}
and Lemma \ref{lem:U} completes the proof.
\end{proof}

\bibliographystyle{plain}
\bibliography{wmodd}

\begin{thebibliography}{10}

\bibitem{BieBizMal16}
Pawe{\l} Biernat, Piotr Bizo{\'n}, and Maciej Maliborski.
\newblock Threshold for blowup for equivariant wave maps in higher dimensions.
\newblock {\em Preprint arXiv:1608.07707}, 2016.

\bibitem{Biz00}
Piotr Bizo{\'n}.
\newblock Equivariant self-similar wave maps from {M}inkowski spacetime into
  3-sphere.
\newblock {\em Comm. Math. Phys.}, 215(1):45--56, 2000.

\bibitem{BizBie15}
Piotr Bizo{\'n} and Pawe{\l} Biernat.
\newblock Generic self-similar blowup for equivariant wave maps and
  {Y}ang-{M}ills fields in higher dimensions.
\newblock {\em Comm. Math. Phys.}, 338(3):1443--1450, 2015.

\bibitem{BizChmTab00}
Piotr Bizo{\'n}, Tadeusz Chmaj, and Zbis{\l}aw Tabor.
\newblock Dispersion and collapse of wave maps.
\newblock {\em Nonlinearity}, 13(4):1411--1423, 2000.

\bibitem{BizChmTab01}
Piotr Bizo{\'n}, Tadeusz Chmaj, and Zbis{\l}aw Tabor.
\newblock Formation of singularities for equivariant {$(2+1)$}-dimensional wave
  maps into the 2-sphere.
\newblock {\em Nonlinearity}, 14(5):1041--1053, 2001.

\bibitem{CazShaTah98}
Thierry Cazenave, Jalal Shatah, and A.~Shadi Tahvildar-Zadeh.
\newblock Harmonic maps of the hyperbolic space and development of
  singularities in wave maps and {Y}ang-{M}ills fields.
\newblock {\em Ann. Inst. H. Poincar\'e Phys. Th\'eor.}, 68(3):315--349, 1998.

\bibitem{CosDonXia16}
O.~Costin, R.~Donninger, and X.~Xia.
\newblock A proof for the mode stability of a self-similar wave map.
\newblock {\em Nonlinearity}, 29(8):2451--2473, 2016.

\bibitem{CosDonGlo16}
Ovidiu Costin, Roland Donninger, and Irfan Glogi{\'{c}}.
\newblock Mode stability of self-similar wave maps in higher dimensions.
\newblock {\em Communications in Mathematical Physics}, pages 1--14, 2016.

\bibitem{Cot15}
R.~C{\^o}te.
\newblock On the soliton resolution for equivariant wave maps to the sphere.
\newblock {\em Comm. Pure Appl. Math.}, 68(11):1946--2004, 2015.

\bibitem{CotKenLawSch15a}
R.~C{\^o}te, C.~E. Kenig, A.~Lawrie, and W.~Schlag.
\newblock Characterization of large energy solutions of the equivariant wave
  map problem: {I}.
\newblock {\em Amer. J. Math.}, 137(1):139--207, 2015.

\bibitem{CotKenLawSch15b}
R.~C{\^o}te, C.~E. Kenig, A.~Lawrie, and W.~Schlag.
\newblock Characterization of large energy solutions of the equivariant wave
  map problem: {II}.
\newblock {\em Amer. J. Math.}, 137(1):209--250, 2015.

\bibitem{DodLaw15}
Benjamin Dodson and Andrew Lawrie.
\newblock Scattering for radial, semi-linear, super-critical wave equations
  with bounded critical norm.
\newblock {\em Arch. Ration. Mech. Anal.}, 218(3):1459--1529, 2015.

\bibitem{Don11}
Roland Donninger.
\newblock On stable self-similar blowup for equivariant wave maps.
\newblock {\em Comm. Pure Appl. Math.}, 64(8):1095--1147, 2011.

\bibitem{Don14}
Roland Donninger.
\newblock Stable self-similar blowup in energy supercritical {Y}ang-{M}ills
  theory.
\newblock {\em Math. Z.}, 278(3-4):1005--1032, 2014.

\bibitem{DonSch12}
Roland Donninger and Birgit Sch{\"o}rkhuber.
\newblock Stable self-similar blow up for energy subcritical wave equations.
\newblock {\em Dyn. Partial Differ. Equ.}, 9(1):63--87, 2012.

\bibitem{DonSch14}
Roland Donninger and Birgit Sch{\"o}rkhuber.
\newblock Stable blow up dynamics for energy supercritical wave equations.
\newblock {\em Trans. Amer. Math. Soc.}, 366(4):2167--2189, 2014.

\bibitem{DonSch16a}
Roland Donninger and Birgit Sch{\"o}rkhuber.
\newblock On blowup in supercritical wave equations.
\newblock {\em Comm. Math. Phys.}, 346(3):907--943, 2016.

\bibitem{DonSch16}
Roland Donninger and Birgit Sch{\"o}rkhuber.
\newblock Stable blowup for wave equations in odd space dimensions.
\newblock {\em Annales de l'Institut Henri Poincare (C) Non Linear Analysis},
  2016.

\bibitem{DonSchAic12}
Roland Donninger, Birgit Sch{\"o}rkhuber, and Peter~C. Aichelburg.
\newblock On stable self-similar blow up for equivariant wave maps: the
  linearized problem.
\newblock {\em Ann. Henri Poincar\'e}, 13(1):103--144, 2012.

\bibitem{EngNag00}
Klaus-Jochen Engel and Rainer Nagel.
\newblock {\em One-parameter semigroups for linear evolution equations}, volume
  194 of {\em Graduate Texts in Mathematics}.
\newblock Springer-Verlag, New York, 2000.
\newblock With contributions by S. Brendle, M. Campiti, T. Hahn, G. Metafune,
  G. Nickel, D. Pallara, C. Perazzoli, A. Rhandi, S. Romanelli and R.
  Schnaubelt.

\bibitem{CanKri15}
Can Gao and Joachim Krieger.
\newblock Optimal polynomial blow up range for critical wave maps.
\newblock {\em Commun. Pure Appl. Anal.}, 14(5):1705--1741, 2015.

\bibitem{Kat95}
Tosio Kato.
\newblock {\em Perturbation theory for linear operators}.
\newblock Classics in Mathematics. Springer-Verlag, Berlin, 1995.
\newblock Reprint of the 1980 edition.

\bibitem{Kla97}
S.~Klainerman.
\newblock On the regularity of classical field theories in {M}inkowski
  space-time {${\bf R}^{3+1}$}.
\newblock In {\em Nonlinear partial differential equations in geometry and
  physics ({K}noxville, {TN}, 1995)}, volume~29 of {\em Progr. Nonlinear
  Differential Equations Appl.}, pages 29--69. Birkh\"auser, Basel, 1997.

\bibitem{KlaRod01}
Sergiu Klainerman and Igor Rodnianski.
\newblock On the global regularity of wave maps in the critical {S}obolev norm.
\newblock {\em Internat. Math. Res. Notices}, (13):655--677, 2001.

\bibitem{KriSchTat08}
J.~Krieger, W.~Schlag, and D.~Tataru.
\newblock Renormalization and blow up for charge one equivariant critical wave
  maps.
\newblock {\em Invent. Math.}, 171(3):543--615, 2008.

\bibitem{Kri03}
Joachim Krieger.
\newblock Global regularity of wave maps from {${\bf R}^{3+1}$} to surfaces.
\newblock {\em Comm. Math. Phys.}, 238(1-2):333--366, 2003.

\bibitem{KriSch12}
Joachim Krieger and Wilhelm Schlag.
\newblock {\em Concentration compactness for critical wave maps}.
\newblock EMS Monographs in Mathematics. European Mathematical Society (EMS),
  Z\"urich, 2012.

\bibitem{LawOh16}
Andrew Lawrie and Sung-Jin Oh.
\newblock A refined threshold theorem for {$(1+2)$}-dimensional wave maps into
  surfaces.
\newblock {\em Comm. Math. Phys.}, 342(3):989--999, 2016.

\bibitem{Mis78}
Charles~W. Misner.
\newblock Harmonic maps as models for physical theories.
\newblock {\em Phys. Rev. D (3)}, 18(12):4510--4524, 1978.

\bibitem{Mon89}
Vincent Moncrief.
\newblock Reduction of {E}instein's equations for nonstationary cylindrical
  cosmic strings.
\newblock {\em Phys. Rev. D (3)}, 39(2):429--433, 1989.

\bibitem{NahSteUhl03}
Andrea Nahmod, Atanas Stefanov, and Karen Uhlenbeck.
\newblock On the well-posedness of the wave map problem in high dimensions.
\newblock {\em Comm. Anal. Geom.}, 11(1):49--83, 2003.

\bibitem{RapRod12}
Pierre Rapha{\"e}l and Igor Rodnianski.
\newblock Stable blow up dynamics for the critical co-rotational wave maps and
  equivariant {Y}ang-{M}ills problems.
\newblock {\em Publ. Math. Inst. Hautes \'Etudes Sci.}, 115:1--122, 2012.

\bibitem{Rau12}
Jeffrey Rauch.
\newblock {\em Hyperbolic partial differential equations and geometric optics},
  volume 133 of {\em Graduate Studies in Mathematics}.
\newblock American Mathematical Society, Providence, RI, 2012.

\bibitem{Ren08}
Alan~D. Rendall.
\newblock {\em Partial differential equations in general relativity}, volume~16
  of {\em Oxford Graduate Texts in Mathematics}.
\newblock Oxford University Press, Oxford, 2008.

\bibitem{RodSte10}
Igor Rodnianski and Jacob Sterbenz.
\newblock On the formation of singularities in the critical {${\rm O}(3)$}
  {$\sigma$}-model.
\newblock {\em Ann. of Math. (2)}, 172(1):187--242, 2010.

\bibitem{Sha16}
Sohrab Shahshahani.
\newblock Renormalization and blow-up for wave maps from {$S^2\times\Bbb{R}$}
  to {$S^2$}.
\newblock {\em Trans. Amer. Math. Soc.}, 368(8):5621--5654, 2016.

\bibitem{Sha88}
Jalal Shatah.
\newblock Weak solutions and development of singularities of the {${\rm
  SU}(2)$} {$\sigma$}-model.
\newblock {\em Comm. Pure Appl. Math.}, 41(4):459--469, 1988.

\bibitem{ShaStr02}
Jalal Shatah and Michael Struwe.
\newblock The {C}auchy problem for wave maps.
\newblock {\em Int. Math. Res. Not.}, (11):555--571, 2002.

\bibitem{ShaTah94}
Jalal Shatah and A.~Shadi Tahvildar-Zadeh.
\newblock On the {C}auchy problem for equivariant wave maps.
\newblock {\em Comm. Pure Appl. Math.}, 47(5):719--754, 1994.

\bibitem{SteTat10a}
Jacob Sterbenz and Daniel Tataru.
\newblock Energy dispersed large data wave maps in {$2+1$} dimensions.
\newblock {\em Comm. Math. Phys.}, 298(1):139--230, 2010.

\bibitem{SteTat10b}
Jacob Sterbenz and Daniel Tataru.
\newblock Regularity of wave-maps in dimension {$2+1$}.
\newblock {\em Comm. Math. Phys.}, 298(1):231--264, 2010.

\bibitem{Str97}
Michael Struwe.
\newblock Wave maps.
\newblock In {\em Nonlinear partial differential equations in geometry and
  physics ({K}noxville, {TN}, 1995)}, volume~29 of {\em Progr. Nonlinear
  Differential Equations Appl.}, pages 113--153. Birkh\"auser, Basel, 1997.

\bibitem{Str03}
Michael Struwe.
\newblock Equivariant wave maps in two space dimensions.
\newblock {\em Comm. Pure Appl. Math.}, 56(7):815--823, 2003.
\newblock Dedicated to the memory of J{\"u}rgen K. Moser.

\bibitem{Tao01a}
Terence Tao.
\newblock Global regularity of wave maps. {I}. {S}mall critical {S}obolev norm
  in high dimension.
\newblock {\em Internat. Math. Res. Notices}, (6):299--328, 2001.

\bibitem{Tao01b}
Terence Tao.
\newblock Global regularity of wave maps. {II}. {S}mall energy in two
  dimensions.
\newblock {\em Comm. Math. Phys.}, 224(2):443--544, 2001.

\bibitem{Tat98}
Daniel Tataru.
\newblock Local and global results for wave maps. {I}.
\newblock {\em Comm. Partial Differential Equations}, 23(9-10):1781--1793,
  1998.

\bibitem{Tat01}
Daniel Tataru.
\newblock On global existence and scattering for the wave maps equation.
\newblock {\em Amer. J. Math.}, 123(1):37--77, 2001.

\bibitem{Tat05}
Daniel Tataru.
\newblock Rough solutions for the wave maps equation.
\newblock {\em Amer. J. Math.}, 127(2):293--377, 2005.

\bibitem{TurSpe90}
Neil Turok and David Spergel.
\newblock Global texture and the microwave background.
\newblock {\em Physical Review Letters 64}, (2736), 1990.

\bibitem{Wei90}
Gilbert Weinstein.
\newblock On rotating black holes in equilibrium in general relativity.
\newblock {\em Comm. Pure Appl. Math.}, 43(7):903--948, 1990.

\end{thebibliography}

\end{document}